\newcommand{\R}{\ensuremath{\mathbb{R}}}
\newtheorem {theorem} {Theorem}
\newtheorem {proposition} [theorem]{Proposition}
\newtheorem {corollary} [theorem]{Corollary}
\newtheorem {lemma}  [theorem]{Lemma}
\title[Planar quasi--homogeneous differential systems]
{Global dynamics of planar quasi--homogeneous differential systems}
\author[Y. Tang, X. Zhang]
{Yilei Tang$^1$ and Xiang Zhang$^2$}
\address{$^1$ Department of Mathematics,  Shanghai Jiao Tong University, Shanghai, 200240, P. R. China}
\email{mathtyl@sjtu.edu.cn}
\address{$^2$ Department of Mathematics,  and MOE--LSC, Shanghai Jiao Tong University, Shanghai, 200240, P. R. China}
\email{xzhang@sjtu.edu.cn}
\subjclass[2010]{Primary: 37G05, Secondary: 37G10, 34C23, 34C20}
\keywords{Quasi--homogeneous polynomial systems; homogeneous systems; global phase portrait; blow up.}
\begin{document}

\begin{abstract}
 In this paper we provide a new method to study global dynamics of planar quasi--homogeneous differential systems.
We first prove that all planar quasi--homogeneous polynomial differential systems can be translated into homogeneous differential systems
and show that  all quintic quasi--homogeneous but non--homogeneous systems can be reduced to four homogeneous ones.
Then we present some properties of homogeneous systems, which can be used to discuss the dynamics of quasi--homogeneous systems.
Finally we characterize the global topological phase portraits of quintic quasi--homogeneous but non--homogeneous differential systems.
\end{abstract}

\maketitle

\section{ Introduction }\label{s1}

In the qualitative theory of planar polynomial differential systems, there are lots of results on their global topological structures. But there are only few class of planar polynomial differential systems whose topological phase portraits were completely characterized. This paper will focus on the global structures of  quasi--homogeneous polynomial differential systems.

Consider a real planar polynomial differential system
\begin{eqnarray}  \label{quasi5-general}
 \dot{x}¢« = P(x, y), \qquad\qquad  \dot{y}¢« = Q(x, y),
\end{eqnarray}
where  $P(x, y), Q(x, y) \in \R [x, y]$, $PQ \not\equiv 0$ and the origin $O=(0, 0)$  is a singularity of system \eqref{quasi5-general}.
As usual, the dot denotes derivative with respect to an independent real variable $t$ and
$\mathbb{R}[x, y]$ denotes the ring of polynomials in
the variables $x$ and $y$ with coefficients in $\R$.
We say that  system (\ref{quasi5-general}) has {\it degree $n$} if
$n=\max\{\mbox{deg}\,P,\mbox{deg}\, Q\}$.  In what follows we assume without loss of generality
that $P$ and $Q$ in system \eqref{quasi5-general} have not a non--constant common factor.

System (\ref{quasi5-general}) is called a {\it quasi--homogeneous
polynomial differential system} if there exist constants
$s_1,s_2,d\in \mathbb N$ such that for an arbitrary
$\alpha\in
\R _+$ it holds that
\begin{equation}  \label{alphaPQ}
P(\alpha^{s_1}x, \alpha^{s_2}y) =
\alpha^{s_1+d-1} P(x, y), \ \  \ \  Q(\alpha^{s_1}x, \alpha^{s_2}y)=
\alpha^{s_2+d-1}  Q(x, y),
\end{equation}
where $\mathbb N$ is the set of positive integers and $\mathbb R_+$ is the set of positive real numbers. We call
$(s_1,s_2)$ {\it weight exponents} of system (\ref{quasi5-general})
and $d$ {\it weight degree} with respect to the weight exponents.
Moreover,  $w =(s_1, s_2, d)$ is denominated {\it weight vector} of system
(\ref{quasi5-general}) or of its associated vector field. For a
quasi--homogeneous polynomial differential system
(\ref{quasi5-general}), a weight vector $
\widetilde{w}=(\widetilde{s}_1 , \widetilde{s}_2, \tilde{d})$ is
{\it minimal} for system (\ref{quasi5-general}) if any other weight
vector $(s_1, s_2, d)$ of system (\ref{quasi5-general}) satisfies
$\tilde{s}_1\le s_1, \tilde{s}_2\le s_2$ and $\tilde{d}\le d$.
Clearly each quasi--homogeneous polynomial differential system has a
unique minimal weight vector. When $s_1=s_2=1$, system
(\ref{quasi5-general}) is a homogeneous one of degree $d$.

Quasi--homogeneous  polynomial differential systems have  been
intensively investigated by many different authors  from integrability point of
view, see for example \cite{Alga2009, GGL2013, Gori1996, Hu2007} and the references therein.
It is well known that all planar quasi--homogeneous vector fields are Liouvillian
integrable, see e.g. \cite{Garc2003,Garc2013,Li2009}. Specially, for
the polynomial and rational integrability of planar
quasi--homogeneous vector fields we refer readers to \cite{Alga2010,
Cair2007, Llib2002, Yosh1983}, and for the center and limit cycle
problems we refer to \cite{Alga2012, Gavr2009, Li2009} and the references therein.

Homogeneous differential systems are a class of special quasi--homogeneous polynomial differential systems of form \eqref{quasi5-general} with
$\mbox{deg} \, P=\mbox{deg} \,Q$, which  have also been studied by several authors, see e.g.
\cite{Date1979, New1978, Sib1977, Vdo1984, Ye1986} for quadratic homogeneous systems, \cite{Cima1990, Ye1995} for cubic homogeneous ones,
and \cite{Cima1990,  Llib1996} for homogeneous ones of arbitrary degree.
These papers have either characterized the phase portraits of homogeneous polynomial vector fields of degrees 2 and 3,
or obtained the algebraic classification of homogeneous vector fields or characterized the structurally stable homogeneous vector fields.

Recently, Garc\'ia {\it et al} \cite{Garc2013} provided an
algorithm to compute quasi--homogeneous but non--homogeneous polynomial differential systems with a given degree
and obtained all the quadratic and cubic quasi--homogeneous but non--homogeneous vector fields.
Aziz {\it et al} \cite{Aziz2014}  characterized all cubic
quasi--homogeneous polynomial differential equations which have a center. Liang {\it et al} \cite{Liang2014} classified all
quartic quasi--homogeneous but non--homogeneous differential systems, and obtained all their topological phase portraits.
Tang {\it et al} \cite{TWZ2015} presented all quintic quasi--homogeneous but non--homogeneous differential systems, and characterized their center problem.

Until now the topological phase portraits of all quintic quasi--homogeneous but non--homogeneous differential systems have not been settled.
As we checked, it is difficult to apply the methods in \cite{Aziz2014, Liang2014} to deal with this problem.
For doing so, we will provide a new method to study topological structure of  quasi--homogeneous but not homogeneous differential systems.
First we prove a general result and show that all quasi--homogeneous differential systems  can be translated into
homogeneous ones. Secondly we characterize the phase portraits of quintic quasi--homogeneous but non--homogeneous polynomial
vector fields.

This article is organized as follows. Section \ref{s2} will concentrate on  properties of quasi--homogeneous
 polynomial differential systems. There we provide expressions of all homogeneous differential systems which are transformed from quintic quasi--homogeneous differential systems.
 Section \ref{s3}  introduces the methods of  blow--up and normal sectors to research the global properties of homogeneous differential systems,
 where we obtain some results, which partially improve the results in \cite{Cima1990}.
The last section  is devoted to  study the  global structures and phase portraits of quintic quasi--homogeneous differential systems.



\bigskip


\section{Properties of  quasi--homogeneous vector fields} \label{s2}

The main aim of this section is to prove that all quasi--homogeneous but non--homogeneous differential systems can be translated into
homogeneous differential systems and to apply this result to study topological structure of quintic  quasi--homogeneous differential systems.
For doing so, we need to introduce the generic form of a quasi--homogeneous but non--homogeneous differential system of degree $n$.
Without loss of generality we assume that $s_1 > s_2$,
otherwise we can exchange the coordinates $x$ and $y$.

By \cite[Proposition 10]{Garc2013}, if system \eqref{quasi5-general} is quasi--homogeneous but non--homogeneous of degree $n$ with the weight vector
 $(s_1,s_2,d)$ and $d>1$, then the system has the minimal weight vector
\begin{eqnarray} \label{tw}
\widetilde{w} = \left(\frac{ \varsigma
+ \kappa}{s}, \,\,\, \frac{\kappa}{s},\,\,\, 1 + \frac{(p -1)\varsigma + (n - 1)\kappa}{s}\right),
\end{eqnarray}
with $p\in \{0,1,...,n-1\}$, $\varsigma \in \{1,2,...,n-p\}$ and $\kappa\in \{1,\ldots,n-p-\varsigma+1\}$ satisfying
 \[
 s_1=\frac{(\varsigma+\kappa)(d-1)}{D}, \qquad s_2=\frac{\kappa(d-1)}{D},
 \]
 where $D= (p -1)\varsigma + (n - 1)\kappa$  and $s=\gcd(\varsigma,\kappa)$. Furthermore, by the algorithm posed in subsection 3.1 of \cite{Garc2013}
 the quasi--homogeneous but non--homogeneous differential system \eqref{quasi5-general} of degree $n$
 with the weight vector $(s_1,s_2,d)$ can be written as
\begin{eqnarray} \label{quasi-sys}
X_{p\varsigma \kappa}=X_n^p+X_{n-\varsigma}^{p\varsigma \kappa} +V_{p\varsigma \kappa},
\end{eqnarray}
where
\[
X_n^p=(a_{p,n-p}x^py^{n-p}, b_{p-1,n-p+1}x^{p-1}y^{n-p+1})^T
\]
is the homogeneous part of degree $n$ with coefficients not simultaneous vanishing,
\[
V_{p\varsigma \kappa}=
\sum\limits_{\scriptsize\begin{array}{c} \varsigma_1\in\{1,\ldots,n-p\}\setminus\{\varsigma\}
\\ \kappa_{\varsigma_1}\varsigma=\kappa \varsigma_1 \mbox{ and }
\\ \kappa_{\varsigma_1}\in\{1,\ldots,n-\varsigma_1-p+1\}\end{array}} X_{n-\varsigma_1}^{p\varsigma_1\kappa_{\varsigma_1}},
\]
\[
X_{n-\varsigma}^{p\varsigma \kappa}=(a_{p+\kappa,n-\varsigma-p-\kappa}x^{p+\kappa}y^{n-\varsigma-p-\kappa}, \,
b_{p+\kappa-1,n-\varsigma-p-\kappa+1}x^{p+\kappa-1}y^{n-\varsigma-p-\kappa+1})^T,
\]
and
$X_{n-\varsigma_1}^{p\varsigma_1\kappa_{\varsigma_1}}$'s having the same expressions as that of $X_{n-\varsigma}^{p\varsigma \kappa}$.
In order for $X_{p\varsigma\kappa}$ to be quasi--homogeneous but non--homogeneous of degree $n$ we must have $X_n^p\not\equiv 0$
and at least one of the other elements not identically vanishing.

Using the above algorithm and the associated notations, we can prove the next result.

\medskip

\begin{theorem}\label{th-homo}
Any quasi--homogeneous but non--homogeneous polynomial differential system \eqref{quasi5-general} of degree $n$
can be transformed into a homogeneous polynomial differential system  by a change of variables being of the composition
of the transformation $ \tilde{x}=(\pm x)^{\frac{s_2}{\beta}},\quad  ~~ \tilde{y}=(\pm y)^{\frac{s_1}{\beta}}$, where $\beta$ is a suitable nonnegative integer.
\end{theorem}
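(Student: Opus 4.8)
The plan is to homogenize the vector field by a weighted substitution that converts the common \emph{weighted} degree of its monomials into an ordinary common degree. Concretely, I would start from the generic form \eqref{quasi-sys} together with its minimal weight vector $(s_1,s_2,d)$, so that $\gcd(s_1,s_2)=1$ and $s_1>s_2$, and introduce new variables by $\tilde x=(\pm x)^{s_2/\beta}$, $\tilde y=(\pm y)^{s_1/\beta}$, equivalently $x=(\pm\tilde x)^{\beta/s_2}$, $y=(\pm\tilde y)^{\beta/s_1}$, leaving the integer $\beta\in\N$ to be fixed at the end.

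The key computation is to differentiate and re-expand. From $\dot{\tilde x}=\frac{s_2}{\beta}x^{s_2/\beta-1}P(x,y)$, a monomial $x^iy^j$ of $P$ --- which by quasi-homogeneity satisfies $s_1 i+s_2 j=s_1+d-1$ --- is sent to a monomial in $(\tilde x,\tilde y)$ of total degree
\[
1+(i-1)\frac{\beta}{s_2}+j\frac{\beta}{s_1}=1+\frac{\beta}{s_1 s_2}\bigl(s_1 i+s_2 j-s_1\bigr)=1+\frac{\beta(d-1)}{s_1 s_2},
\]
which is independent of $(i,j)$. Carrying out the same expansion for $\dot{\tilde y}=\frac{s_1}{\beta}y^{s_1/\beta-1}Q(x,y)$, now using $s_1 i+s_2 j=s_2+d-1$, yields the same value. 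Hence, for \emph{every} admissible $\beta$, the transformed field is homogeneous of degree $1+\beta(d-1)/(s_1 s_2)$, and taking $\beta$ to be a multiple of $s_1 s_2$ (for instance $\beta=s_1 s_2$, giving degree $d$) makes this a positive integer.

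It then remains to choose $\beta$ so that the homogeneous field is genuinely polynomial, i.e.\ so that each exponent $1+(i-1)\beta/s_2$ and $j\beta/s_1$ (together with the analogous ones coming from $Q$) is a nonnegative integer. Here I would invoke the explicit structure of \eqref{quasi-sys}: the lowest-order monomials are $x^p y^{n-p}$ in $P$ and $x^{p-1}y^{n-p+1}$ in $Q$, and every other exponent pair differs from these by integer multiples of the primitive step $(s_2,-s_1)$, i.e.\ $(\kappa/s,-(\varsigma+\kappa)/s)$. This forces $i\equiv p$ and $j\equiv n-p$ modulo the appropriate weights, so integrality of all exponents collapses to the two divisibility conditions $s_2\mid(p-1)\beta$ and $s_1\mid(n-p)\beta$, each met by a suitable $\beta$ (and by $\beta=s_1 s_2$ in particular); nonnegativity then follows from $i\ge p$ and $j\ge 0$ away from the degenerate configurations flagged below.

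Finally I would account for the real powers. Since $s_2/\beta$ and $s_1/\beta$ need not be integers, the map is defined quadrant by quadrant through the signs $\pm$, giving on each open quadrant a homeomorphism conjugating the quasi-homogeneous flow to the homogeneous one; composing these quadrant maps is precisely the ``composition of the transformation'' asserted in the statement. The main obstacle I anticipate is exactly this last step fused with the uniform choice of $\beta$: one must confirm that a \emph{single} $\beta$ simultaneously clears all denominators \emph{and} keeps every exponent nonnegative, most delicately in the boundary cases $p=0$ or when $P$ or $Q$ carries a pure-axis monomial $x^i$ or $y^j$. It is there that the fine structure of \eqref{quasi-sys} and the coprimality of the minimal weight exponents are indispensable.
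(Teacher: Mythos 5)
Your core computation is exactly the paper's: the same substitution, the same observation that quasi-homogeneity forces every transformed monomial to have the common total degree $1+\beta(d-1)/(s_1s_2)$, and the same choice $\beta=\mbox{lcm}(s_1,s_2)=s_1s_2$ (the paper records this as \eqref{equl1}--\eqref{equl3} and \eqref{beta}). The genuine gap is in what you yourself call the ``main obstacle'': the boundary cases are not cases where ``the fine structure of \eqref{quasi-sys} and coprimality'' rescue nonnegativity --- they are cases where nonnegativity fails for \emph{every} admissible $\beta$, so no choice of $\beta$ can repair them. Take $X_{011}$ from Lemma \ref{lm-q5list}: $P$ contains the pure-axis monomial $a_{05}y^5$ and $(s_1,s_2,d)=(2,1,4)$, so $i=0$ and the $\tilde x$-exponent is $1-\beta/s_2=1-\beta$, while integrality of the $\tilde y$-exponent $j\beta/s_1=5\beta/2$ forces $\beta$ to be even, hence $1-\beta<0$. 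The same obstruction occurs whenever $p=0$ (then $P$ contains $a_{0,n}y^{n}$), whenever $Q$ carries a pure $x$-power (the case $p=n+1-\varsigma_1-\kappa_1$), and in the entire $d=1$ family \eqref{sys-d1} --- which, moreover, is not covered by the generic form \eqref{quasi-sys} at all (it comes from a separate proposition of Garc\'ia et al.) and so needs its own treatment that your argument never gives.

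The missing idea is the paper's monomial reparametrization of time composed with the change of variables: $dt=x^{\beta/s_2-1}\,dt_1$ when $p=0$, $dt=y^{\beta/s_1-1}\,dt_1$ when $Q$ has a pure $x$-power, $dt=x^{\beta/s_2-1}y^{\beta/s_1-1}\,dt_1$ when both degeneracies occur, and $dt=\tilde x^{\,n-1}dt_1$ in the $d=1$ case. Multiplying the whole field by one monomial clears the negative exponents without destroying homogeneity and is harmless for the topological classification off the axes. Note also that once this rescaling enters, your degree formula $1+\beta(d-1)/(s_1s_2)$ is no longer the degree of the resulting polynomial system: it shifts to $d+s_1-1$, $d+s_2-1$ or $d+s_1+s_2-2$ in the respective degenerate cases, which is precisely the content of the paper's Theorem \ref{prop1}. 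As written, your argument proves the theorem only in the interior case $0<p<n+1-\varsigma_1-\kappa_1$ with $d>1$.
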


\begin{proof}
Assume that the quasi--homogeneous system of degree $n$ has the minimal  weight vector
 $(s_1,s_2,d)$.  We will distinguish two cases: $d>1$ and $d=1$.

\noindent {\it Case $1$}. $d>1$. According to discussion before Theorem \ref{th-homo}, we only need to prove that the following system
\begin{equation}\label{sys1}
\left(\begin{array}{l}
\dot{x}\\
\dot{y}
\end{array}\right)
=\left(\begin{array}{l}
a_{p,n-p}x^py^{n-p} +a_{p+\kappa,n-\varsigma-p-\kappa}x^{p+\kappa}y^{n-\varsigma-p-\kappa}
\\
 \qquad  +a_{p+\kappa_1,n-\varsigma_1-p-\kappa_1}x^{p+\kappa_1}y^{n-\varsigma_1-p-\kappa_1}
 \\
 b_{p-1,n-p+1}x^{p-1}y^{n-p+1} +b_{p+\kappa-1,n-\varsigma-p-\kappa+1}x^{p+\kappa-1}y^{n-\varsigma-p-\kappa+1}
\\
\qquad  +b_{p+\kappa_1-1,n-\varsigma_1-p-\kappa_1+1}x^{p+\kappa_1-1}y^{n-\varsigma_1-p-\kappa_1+1}
\end{array}\right)
+  \widetilde{X},
\end{equation}
can be changed to a homogeneous differential system, where
\begin{eqnarray*}
  \widetilde{X} &:=&V_{p\varsigma \kappa}-Z_1,
\end{eqnarray*}
with
\[
 Z_1:=\left(\begin{array}{c}
 a_{p+\kappa_1,n-\varsigma_1-p-\kappa_1}x^{p+\kappa_1}y^{n-\varsigma_1-p-\kappa_1}\\
b_{p+\kappa_1-1,n-\varsigma_1-p-\kappa_1+1}x^{p+\kappa_1-1}y^{n-\varsigma_1-p-\kappa_1+1}
\end{array}\right)
\]
an arbitrary chosen vector valued homogeneous part in $V_{p\varsigma \kappa}$ (see  \eqref{quasi-sys}).
Note that
\begin{align*}
&b_{p-1,n-p+1}=0,\qquad &\mbox{ if } &  p=0,
\\
&a_{p+\kappa_1,n-\varsigma_1-p-\kappa_1}=0,\qquad & \mbox{ if } & p=n+1-\varsigma_1-\kappa_1 .
\end{align*}

For $(x, y)\in \R _+^2$, taking the change of variables
\begin{eqnarray} \label{change1}
 \tilde{x}=x^{\frac{s_2}{\beta}},\quad  ~~ \tilde{y}=y^{\frac{s_1}{\beta}},
\end{eqnarray}
where $\beta \in \mathbb N$ will be determined later on,
we get from system  \eqref{sys1} that
\begin{equation}\label{sys2}
\left(\begin{array}{c}
\dot{x}\\
\dot{y}
\end{array}\right)
=\left(\begin{array}{l}
\frac{s_2}{\beta} \left( a_{p,n-p} x^{\frac{\beta(p-1)}{s_2}+1} y^{\frac{\beta(n-p)}{s_1}}
+a_{p+\kappa,n-\varsigma-p-\kappa}x^{\frac{\beta(p+\kappa-1)}{s_2}+1}y^{\frac{\beta(n-\varsigma-p-\kappa)}{s_1}}\right.
\\
\left.\qquad\quad +a_{p+\kappa_1,n-\varsigma_1-p-\kappa_1}x^{\frac{\beta(p+\kappa_1-1)}{s_2}+1}y^{\frac{\beta(n-\varsigma_1-p-\kappa_1)}{s_1}} \right)
 \\
\frac{s_1}{\beta} \left(b_{p-1,n-p+1}x^{\frac{\beta(p-1)}{s_2}}y^{\frac{\beta(n-p)}{s_1}+1}\right.
\\
\quad\qquad +b_{p+\kappa-1,n-\varsigma-p-\kappa+1}x^{\frac{\beta(p+\kappa-1)}{s_2}}y^{\frac{\beta(n-\varsigma-p-\kappa)}{s_1}+1}
\\
\left.\quad\qquad +b_{p+\kappa_1-1,n-\varsigma_1-p-\kappa_1+1}x^{\frac{\beta(p+\kappa_1-1)}{s_2}}y^{\frac{\beta(n-\varsigma_1-p-\kappa_1)}{s_1}+1} \right)
\end{array}\right)
+  \widetilde{X},
\end{equation}
where for simplicity we still use $x$ and $y$ replacing $\tilde{x}$ and $\tilde{y}$. We remark that here taking $(x, y) \in \R _+^2$ is only for simplifying notations. In the other cases, for example $x<0$ and $y<0$ we can take  $\tilde{x}=-(-x)^{\frac{s_2}{\beta}},\quad  ~~ \tilde{y}=-(-y)^{\frac{s_1}{\beta}}$.

Next, by \eqref{tw} and \eqref{quasi-sys} we could see
\[
s_1= \frac{ \varsigma + \kappa}{s},\quad s_2=\frac{\kappa}{s}, \quad \mbox{ and }\quad  \kappa_1\varsigma=\kappa\varsigma_1.
\]
These imply that
\begin{equation}\label{equl12}
s_1\kappa=s_2(\varsigma+\kappa) \ \ ~~\mbox{and}~~ \  \  \frac{\kappa}{s_2}-\frac{\varsigma+\kappa}{s_1}=\frac{\kappa_1}{s_2}-\frac{\varsigma_1+\kappa_1}{s_1}.
\end{equation}
Hence the degrees of the terms in the equation of $\dot{x}$ in \eqref{sys2} hold:
\begin{align}\label{equl1}
\mbox{deg} \left( x^{\frac{\beta(p-1)}{s_2}+1} y^{\frac{\beta(n-p)}{s_1}}\right)
& =
\mbox{deg} \left(x^{\frac{\beta(p+\kappa-1)}{s_2}+1}y^{\frac{\beta(n-\varsigma-p-\kappa)}{s_1}}\right)\nonumber
\\&=
\mbox{deg}  \left(x^{\frac{\beta(p+\kappa_1-1)}{s_2}+1}y^{\frac{\beta(n-\varsigma_1-p-\kappa_1)}{s_1}}\right).
\end{align}
Thus, all terms on the right hand side of the equation of $\dot{x}$ in \eqref{sys2} have the  same degree, because $ Z_1$
is an arbitrarily chosen homogeneous vector field in $V_{p\varsigma \kappa}$.

A similar calculation as above shows that
\begin{align}\label{equl2}
\mbox{deg} \left(x^{\frac{\beta(p-1)}{s_2}}y^{\frac{\beta(n-p)}{s_1}+1}\right) &=
\mbox{deg} \left(x^{\frac{\beta(p+\kappa-1)}{s_2}}y^{\frac{\beta(n-\varsigma-p-\kappa)}{s_1}+1}\right)\nonumber
\\
&=
\mbox{deg} \left(x^{\frac{\beta(p+\kappa_1-1)}{s_2}}y^{\frac{\beta(n-\varsigma_1-p-\kappa_1)}{s_1}+1} \right)
\end{align}
in the equation of $\dot{y}$ in \eqref{sys2},  which are also equivalent to
\eqref{equl12}.
Furthermore, notice that the degrees of the second terms on the right hand of $\dot{x}$ and $\dot{y}$ in \eqref{sys2} respectively are same, i.e.,
\begin{equation}\label{equl3}
\mbox{deg} ~  \left(x^{\frac{\beta(p+\kappa-1)}{s_2}+1}y^{\frac{\beta(n-\varsigma-p-\kappa)}{s_1}}\right)
=\mbox{deg}~ \left( x^{\frac{\beta(p+\kappa-1)}{s_2}}y^{\frac{\beta(n-\varsigma-p-\kappa)}{s_1}+1}\right).
\end{equation}
Hence, it follows from \eqref{equl1}, \eqref{equl2} and \eqref{equl3} that all terms in system  \eqref{sys2} have the same degree.

 Now,  we only need to prove that system \eqref{sys2} is a polynomial differential
system  after choosing an appropriate integer $\beta$.

When $0< p<n+1-\varsigma_1-\kappa_1$,
by the expression of the degree mentioned above we can  choose $\beta$ to be the least common multiple
of $s_1$ and $s_2$, i.e.,
\begin{equation}\label{beta}
\beta=\mbox{lcm} (s_1, s_2),
\end{equation}
because $n, s_1, s_2 , \kappa, \kappa_1, \varsigma, \varsigma_1\in \mathbb N$.

When $0=p<n+1-\varsigma_1-\kappa_1$ (resp. $0<p=n+1-\varsigma_1-\kappa_1$, $0=p=n+1-\varsigma_1-\kappa_1$), we still choose $\beta=\mbox{lcm} (s_1, s_2)$.
After the time rescaling $dt=x^{\frac{\beta}{s_2}-1} dt_1$ (resp. $dt=y^{\frac{\beta}{s_1}-1} dt_1$, $dt=x^{\frac{\beta}{s_2}-1} y^{\frac{\beta}{s_1}-1} dt_1$),
system \eqref{sys2} is translated to a polynomial differential system.

\medskip

\noindent{\it Case $2$}. $d=1$. From \cite[Proposition 9]{Garc2013}, the minimal  weight vector of the quasi--homogeneous but non--homogeneous differential
 system of degree $n$ is  $(n, 1, 1)$ and the  system  is
 \begin{eqnarray}
 \label{sys-d1}
 \dot{x}=a_{0n}y^n+a_{10}x, ~~ \ \ \dot{y}=b_{01}y,
\end{eqnarray}
where the coefficients $a_{0n}$, $a_{10}$ and $b_{01}$ are not all equal to zero.
Applying  the transformation
\begin{eqnarray} \label{change-d1}
 \tilde{x}=x^{\frac{1}{n}}, \   \  ~~ \tilde{y}=y,
\end{eqnarray}
together with the time scaling $dt=\tilde{x}^{n-1}dt_1$, system \eqref{sys-d1} becomes
 \begin{eqnarray}
 \label{homo-d1}
 \dot{x}=\frac{1}{n}(a_{0n}y^n+a_{10}x^n), \ \  \ ~~ \dot{y}=b_{01}yx^{n-1},
\end{eqnarray}
where we still use $x, y$ replacing $\tilde{x}, \tilde{y}$ for simplicity.

We complete the proof of the theorem. \end{proof}
\smallskip

We note that the changes of variables \eqref{change1} and \eqref{change-d1} may not be smooth at $x=0$ and $y=0$. But we can apply them for studying topological structure of the systems as shown below.

\medskip


By Theorem \ref{th-homo}, any quasi--homogeneous but  non--homogeneous
polynomial differential system \eqref{quasi-sys} of degree $n$ can be transformed into
a homogeneous differential system.
Actually, we can ascertain the degree of the homogeneous system obtained above.

\medskip

\begin{theorem}\label{prop1}
An arbitrary quasi--homogeneous but non--homogeneous
polynomial differential system  \eqref{quasi5-general}   of degree $n$ with the minimal weight vector  $ \widetilde{w}=(s_1, s_2, d)$ can be transformed to
a homogeneous differential system of degree either $d$, or $d+s_1-1$ or $d+s_2-1$ or $d+s_1+s_2-2$ by the change \eqref{change1}.
\end{theorem}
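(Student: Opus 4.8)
The plan is to compute the degree of the homogeneous system produced in Theorem \ref{th-homo}, rather than to re-derive the transformation itself. Since Theorem \ref{th-homo} already establishes that after the change \eqref{change1} with $\beta=\mathrm{lcm}(s_1,s_2)$ (followed, where necessary, by a time rescaling) every quasi--homogeneous but non--homogeneous system becomes homogeneous, it remains only to track the common degree of the resulting monomials through the four sub-cases distinguished at the end of Case~1 of that proof, together with the case $d=1$.

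First I would record the arithmetic of the minimal weight vector. Writing $\varsigma=s\varsigma'$, $\kappa=s\kappa'$ with $s=\gcd(\varsigma,\kappa)$ and $\gcd(\varsigma',\kappa')=1$, formula \eqref{tw} gives $s_1=\varsigma'+\kappa'$ and $s_2=\kappa'$, so that $\gcd(s_1,s_2)=\gcd(\varsigma'+\kappa',\kappa')=1$. Consequently $\beta=\mathrm{lcm}(s_1,s_2)=s_1s_2$, whence $\beta/s_2=s_1$ and $\beta/s_1=s_2$ are positive integers. This is the key identity that renders the exponents in \eqref{sys2} integral and pins down the degree.

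The core computation is the common degree of the monomials in \eqref{sys2} before any time rescaling. Using $\beta/s_2=s_1$ and $\beta/s_1=s_2$, the first monomial of $\dot x$ has degree
\[
\frac{\beta(p-1)}{s_2}+1+\frac{\beta(n-p)}{s_1}=s_1(p-1)+1+s_2(n-p),
\]
and similarly the first monomial of $\dot y$ has degree $s_1(p-1)+s_2(n-p)+1$. Substituting $s_1=(\varsigma+\kappa)/s$, $s_2=\kappa/s$ and simplifying, both collapse to $1+\bigl((p-1)\varsigma+(n-1)\kappa\bigr)/s$, which is exactly $d$ by \eqref{tw}; by \eqref{equl1}, \eqref{equl2} and \eqref{equl3} all remaining monomials of both components share this degree, so \eqref{sys2} is homogeneous of degree $d$ whenever no rescaling is needed (the sub-case $0<p<n+1-\varsigma_1-\kappa_1$). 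It then remains to add the degree shift produced by each time rescaling: multiplying the field by $x^{\beta/s_2-1}=x^{s_1-1}$ raises every monomial's degree by $s_1-1$, and multiplying by $y^{\beta/s_1-1}=y^{s_2-1}$ raises it by $s_2-1$. Reading off the three rescaled sub-cases therefore yields homogeneous degrees $d+s_1-1$, $d+s_2-1$ and $d+s_1+s_2-2$ respectively, exhausting the four possibilities.

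Finally, Case~2 ($d=1$) is checked directly: here the minimal weight vector is $(s_1,s_2,d)=(n,1,1)$ and the homogeneous system \eqref{homo-d1} visibly has degree $n=d+s_1-1$, consistent with the list (indeed the scaling $dt=\tilde x^{\,n-1}dt_1$ is precisely the $x^{s_1-1}$ rescaling). I expect the only delicate point to be the degree identity $s_1(p-1)+1+s_2(n-p)=d$ and confirming that the same value $d$ emerges from both the $\dot x$ and $\dot y$ components; once $\gcd(s_1,s_2)=1$ is in hand this is a short substitution, so the argument reduces to careful bookkeeping over the four sub-cases of Theorem \ref{th-homo}.
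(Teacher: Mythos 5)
Your proposal is correct, and its overall architecture coincides with the paper's: establish $\gcd(s_1,s_2)=1$ so that $\beta=\mathrm{lcm}(s_1,s_2)=s_1s_2$, verify that the common degree in the generic sub-case $0<p<n+1-\varsigma_1-\kappa_1$ equals $d$, add the shifts $s_1-1$, $s_2-1$, $s_1+s_2-2$ coming from the three time rescalings, and dispose of $d=1$ by inspection of \eqref{homo-d1}. The one genuine difference is how coprimality is obtained. The paper proves it from scratch: assuming $r=\gcd(s_1,s_2)>1$, it uses the quasi-homogeneity identity \eqref{ssd} to show $r\mid d-1$, and then exhibits $(s_1/r,\,s_2/r,\,1+(d-1)/r)$ as a strictly smaller weight vector, contradicting minimality. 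You instead read coprimality off the explicit formula \eqref{tw}: with $\varsigma=s\varsigma'$, $\kappa=s\kappa'$ and $\gcd(\varsigma',\kappa')=1$ one has $s_1=\varsigma'+\kappa'$, $s_2=\kappa'$, hence $\gcd(s_1,s_2)=\gcd(\varsigma',\kappa')=1$. Your route is shorter, but it is conditional on the cited classification \eqref{tw} (valid only for $d>1$, so you correctly treat $d=1$, where $(s_1,s_2)=(n,1)$, separately), whereas the paper's contradiction argument is self-contained and shows that \emph{any} minimal weight vector has coprime exponents, independently of that formula; this is also why the paper can state Corollary \ref{coro1} as a by-product. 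Your degree identity $s_1(p-1)+1+s_2(n-p)=d$, checked by substituting $s_1=(\varsigma+\kappa)/s$, $s_2=\kappa/s$ into the expression for $d$ in \eqref{tw}, is the same computation the paper performs via \eqref{ssd}, just organized through the weight-vector formula rather than the quasi-homogeneity relation.
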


\begin{proof}
First of all, we claim that $s_1$ and $s_2$ are coprime.
Indeed, by contrary, if $s_1$ and $s_2$ have a great common divisor larger than $1$, set
\[
r=\mbox{gcd} (s_1, s_2)>1.
\]
Since $P\not\equiv 0$, without loss of generality, let
$a_{p+\tilde{\kappa},n-\tilde{\varsigma}-p-\tilde{\kappa}}x^{p+\tilde{\kappa}}y^{n-\tilde{\varsigma}-p-\tilde{\kappa}}$
be a nonvanishing monomial of $P(x,y)$ in system  \eqref{quasi5-general}, where $\tilde{\varsigma}, p, \tilde{\kappa} \in \mathbb N \cup \{0\}$
and $n\in\mathbb N$. Thus,
\begin{equation}\label{ssd}
(p+\tilde{\kappa}-1)s_1 +(n-\tilde{\varsigma}-p-\tilde{\kappa})s_2=d-1,
\end{equation}
because system (\ref{quasi5-general}) is  quasi--homogeneous and $P$ satisfies \eqref{alphaPQ}.
It indicates that $r$ also divides $d-1$.
So, we have
\begin{equation}\label{ssd2}
s_1=r\tilde{s}_1, ~~s_2=r\tilde{s}_2, ~~d-1=r(\tilde{d}-1),
\end{equation}
where $\tilde{s}_1, \tilde{s}_2, \tilde{d} \in \mathbb N $ and $\tilde{s}_1<s_1, \tilde{s}_2<s_2, \tilde{d}<d$.
Using equalities \eqref{alphaPQ} again together with \eqref{ssd2}, we obtain
\begin{align} \label{ssdP}
 P(\alpha^{s_1}x, \alpha^{s_2}y) &= P(\alpha^{r\tilde{s}_1}x, \alpha^{r\tilde{s}_2}y)
 =\alpha^{s_1+(d-1)} P(x, y)\nonumber
 \\
&=  \alpha^{r\tilde{s}_1+r(\tilde{d}-1)} P(x, y) =(\alpha^r)^{ \tilde{s}_1+(\tilde{d}-1)} P(x, y).
\end{align}
A similar calculation also shows that
\begin{eqnarray} \label{ssdQ}
\begin{split}
 Q((\alpha^r)^{\tilde{s}_1}x, (\alpha^r)^{\tilde{s}_2}y)  =(\alpha^r)^{ \tilde{s}_2+(\tilde{d}-1)} Q(x, y).
\end{split}
\end{eqnarray}
Hence, by \eqref{ssdP} and  \eqref{ssdQ}, we get that the vector $(\tilde{s}_1, \tilde{s}_2, \tilde{d})$ is also a weight vector  of quasi--homogeneous
 system  \eqref{quasi5-general} and $\tilde{s}_1<s_1, \tilde{s}_2<s_2, \tilde{d}<d$, a contradiction with the fact that
$ \widetilde{w}=(s_1, s_2, d)$ is  the minimal weight vector  of system  \eqref{quasi5-general}. Therefore,
 $s_1$ and $s_2$ have not a common divisor. The claim follows.

The above shows that  $\mbox{lcm} (s_1, s_2)=s_1s_2$. From the choice of $\beta$ as in \eqref{beta}, we get $\beta=s_1s_2$.
In the following, we discuss the degree of system \eqref{sys2} in two cases: $d>1$ and $d=1$.

\noindent{\it Case $1$}. $d>1$.
Then, it  follows from \eqref{equl1} and  \eqref{ssd} that
\[
\mbox{deg} ~\left( x^{\frac{\beta(p+\tilde{k}-1)}{s_2}+1} y^{\frac{\beta(n-\tilde{\varsigma}-p-\tilde{k})}{s_1}}\right)
=(p+\tilde{k}-1)s_1+1+(n-\tilde{\varsigma}-p-\tilde{k})s_2= d,
\]
Hence, we can translate a general quasi--homogeneous but non--homogeneous
differential system  \eqref{quasi5-general} of degree $n$  into
a homogeneous differential system of degree $d$ when $0< p<n+1- \varsigma_1-\kappa_1$.

If $0=p<n+1-\varsigma_1-\kappa_1$,
some calculations show that
\[
\mbox{deg} ~\left( x^{\frac{-\beta}{s_2}+1} y^{\frac{\beta n}{s_1}}\right)+s_1-1 =-s_1+1+ns_2+s_1-1= d+s_1-1,
\]
by substituting $p=\tilde{\varsigma}=\tilde{\kappa}=0 $ in \eqref{ssd} and using \eqref{equl1} again  together with
 a time rescaling  $dt=x^{\frac{\beta}{s_2}-1} dt_1=x^{s_1-1} dt_1$.
At this time,  the  quasi--homogeneous but  non--homogeneous
differential system  \eqref{quasi5-general}   of degree $n$ can be transformed to
a homogeneous differential system of degree $d+s_1-1$.

When  $0<p=n+1-\varsigma_1-\kappa_1$ (resp.  $0=p=n+1-\varsigma_1-\kappa_1$), the quasi--homogeneous but  non--homogeneous
differential system  \eqref{quasi5-general}   of degree $n$ can be transformed to
a homogeneous system of degree $d+s_2-1$ (resp. $d+s_1+s_2-2$) by a similar calculation as that in the case $0=p<n+1-\varsigma_1-\kappa_1$.

\noindent{\it Case $2$}. $d=1$, the  quasi--homogeneous but  non--homogeneous differential
 system of degree $n$ can be transformed to a homogeneous differential system of degree $d+s_1-1$, see \eqref{homo-d1}.

We complete the proof of the theorem.
\end{proof}

\medskip


The following result is an easy consequence of Theorem \ref{prop1}.

\medskip

\begin{corollary}\label{coro1}
If a  quasi--homogeneous but  non--homogeneous
differential system  \eqref{quasi5-general}   of degree $n$ has the minimal weight vector $ \widetilde{w}=(s_1, s_2, d)$,
then at least one of $s_1$ and $s_2$ is odd.
\end{corollary}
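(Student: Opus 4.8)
The plan is to read off the statement directly from the coprimality of $s_1$ and $s_2$, which is precisely the first fact established inside the proof of Theorem \ref{prop1}. So the whole argument reduces to a one-line parity observation once that coprimality is recorded, and I would present it as such.

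First I would recall \emph{why} $s_1$ and $s_2$ are coprime, since this is where all the content lives. The minimality of $\widetilde{w}=(s_1,s_2,d)$ is the crucial hypothesis. Suppose to the contrary that $r=\gcd(s_1,s_2)>1$. Choosing any nonvanishing monomial of $P$ and applying the quasi--homogeneity relation \eqref{alphaPQ} yields the exponent identity \eqref{ssd}, which forces $r\mid d-1$ as well; writing $s_1=r\tilde{s}_1$, $s_2=r\tilde{s}_2$, $d-1=r(\tilde{d}-1)$ as in \eqref{ssd2} and substituting back into \eqref{alphaPQ} produces a genuine weight vector $(\tilde{s}_1,\tilde{s}_2,\tilde{d})$ with $\tilde{s}_1<s_1$, $\tilde{s}_2<s_2$, $\tilde{d}<d$, contradicting minimality. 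This is exactly the derivation carried out at the start of the proof of Theorem \ref{prop1}, so I would simply invoke it rather than reproduce it.

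With $\gcd(s_1,s_2)=1$ in hand, the corollary follows immediately by contradiction: if both $s_1$ and $s_2$ were even, then $2$ would divide each of them, whence $\gcd(s_1,s_2)\ge 2>1$, contradicting the coprimality just established. Therefore at least one of $s_1$ and $s_2$ is odd.

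I do not expect any genuine obstacle here. The only substantive ingredient is the coprimality claim, and that is already proved earlier; the parity step is a triviality. The single point worth stating carefully is that coprimality is a \emph{consequence of the minimality} of the weight vector, so the corollary is genuinely a statement about minimal weight vectors and would fail for an arbitrary (non--minimal) weight vector such as $(2,2,\cdot)$.
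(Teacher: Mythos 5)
Your proposal is correct and matches the paper's intent exactly: the paper presents the corollary as an immediate consequence of the coprimality of $s_1$ and $s_2$ established at the start of the proof of Theorem \ref{prop1}, and your parity argument is precisely that easy consequence made explicit. Your closing remark that the statement genuinely requires \emph{minimality} of the weight vector is also a correct and worthwhile observation.
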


\medskip

By Corollary \ref{coro1} at most one of $s_1$ and $s_2$ is even. If one of them is even, we assume without loss of generality that
$s_2$ is even.  Then the transformation \eqref{change1} can be written as
\begin{eqnarray*} 
 \tilde{x}=x^{\frac{1}{s_1}}, ~~
 \tilde{y}= \left\{
\begin{array}{lll}
& y^{\frac{1}{s_2}} &  ~\mbox{if}~ y>0,
 \\
& -(-y)^{\frac{1}{s_2}}  & ~\mbox{if}~ y<0.
\end{array}
\right.
\end{eqnarray*}
Of course, if $s_2$ is odd, then $\tilde y=y^{\frac 1{s_2}}$.
Clearly the change \eqref{change1} is a one to one correspondence in the half plane $y>0$ provided that $s_1$ is odd.
Then, for $(x, y)\in \R ^2$, we can change any quasi--homogeneous but non--homogeneous polynomial differential system \eqref{quasi5-general} of degree $n$
 into a homogeneous polynomial differential system in Theorem  \ref{th-homo}.

\medskip

 The following theorem guarantees that quasi--homogeneous systems
are symmetric with respect to either an axis or the origin with possibly a time reverse. Using this fact we can obtain global dynamics of a quasi--homogeneous differential system through the dynamics of its associated homogeneous systems in the half plane.

\medskip

\begin{theorem}
\label{th-invariant}
An arbitrary quasi--homogeneous but non--homogeneous
polynomial differential system of degree $n$ with the minimal  weight vector
 $(s_1, s_2, d)$ is invariant either under the change $(x, y)\to (x, -y)$ if $s_1$ is even and $s_2$ is odd,
or under the change $(x, y)\to (-x, y)$ if $s_1$ is odd and $s_2$ is even,
 or under the change $(x, y)\to (-x, -y)$ if both $s_1$ and $s_2$ are odd,  without taking into account the direction of the time.
\end{theorem}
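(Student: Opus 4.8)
The plan is to extract the three symmetries directly from the defining relations \eqref{alphaPQ} by specializing the scaling parameter to $\alpha=-1$. The first and only subtle point is that \eqref{alphaPQ} is only postulated for $\alpha\in\R_+$, so I would begin by promoting it to an identity valid for all real $\alpha$. Fix $(x,y)$ and view each side of
\[
P(\alpha^{s_1}x,\alpha^{s_2}y)=\alpha^{s_1+d-1}P(x,y)
\]
as a function of the single variable $\alpha$. The left-hand side is a polynomial in $\alpha$, obtained by substituting the monomials $\alpha^{s_1}x,\ \alpha^{s_2}y$ into $P$ (its terms carry exponents $s_1 i+s_2 j\ge 0$), and the right-hand side is the monomial $\alpha^{s_1+d-1}$ times a constant, again a polynomial in $\alpha$. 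Two polynomials in $\alpha$ that agree on the infinite set $\R_+$ are identical, so the relation persists at $\alpha=-1$; the same argument applies to $Q$. Setting $\alpha=-1$ then gives
\[
P((-1)^{s_1}x,(-1)^{s_2}y)=(-1)^{s_1+d-1}P(x,y),\qquad
Q((-1)^{s_1}x,(-1)^{s_2}y)=(-1)^{s_2+d-1}Q(x,y).
\]

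Next I would split according to the parities of $s_1$ and $s_2$, noting that by Corollary \ref{coro1} at most one of them is even, so the three situations in the statement are exhaustive (the even--even case cannot occur for the minimal weight vector). Take, for instance, $s_1$ even and $s_2$ odd, so $(-1)^{s_1}=1$ and $(-1)^{s_2}=-1$; the displayed relations become $P(x,-y)=(-1)^{s_1+d-1}P(x,y)$ and $Q(x,-y)=(-1)^{s_2+d-1}Q(x,y)$. Writing the field in the coordinates $X=x,\ Y=-y$ gives $\dot X=P(X,-Y)=(-1)^{s_1+d-1}P(X,Y)$ and $\dot Y=-Q(X,-Y)=(-1)^{s_2+d}Q(X,Y)$. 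Since $s_1$ and $s_2$ have opposite parities here, $s_1+d-1\equiv s_2+d\pmod 2$, so both components carry the same global sign $\varepsilon=(-1)^{s_1+d-1}$: the system is literally invariant if $\varepsilon=1$ and invariant after a time reversal if $\varepsilon=-1$, which is exactly the assertion. The cases ($s_1$ odd, $s_2$ even) and ($s_1,s_2$ both odd) are treated the same way, with the transformations $(x,y)\mapsto(-x,y)$ and $(x,y)\mapsto(-x,-y)$; in each one the matching of signs between the $\dot x$ and $\dot y$ equations reduces to precisely the parity hypothesis of that case—opposite parities in the first two cases, equal parities in the third.

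The genuine obstacle is the single step of justifying $\alpha=-1$, and I would make sure the polynomial-identity argument is stated cleanly, since the defining relations are a priori only multiplicative on $\R_+$; everything after that is sign bookkeeping. It is worth emphasizing that Corollary \ref{coro1} is what makes the method work: by excluding $s_1,s_2$ both even it guarantees that in every admissible case the exponents appearing in the $\dot x$ and $\dot y$ equations have the congruence needed for a common sign $\varepsilon$, so that the candidate reflection is indeed a symmetry up to time direction.
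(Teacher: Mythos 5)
Your proof is correct, but it takes a genuinely different route from the paper's. The paper never evaluates the defining relation \eqref{alphaPQ} at negative $\alpha$; instead it works with the canonical form \eqref{sys1} produced by the algorithm of Garc\'ia et al., applies the reflection monomial by monomial, and then shows all the resulting signs agree by proving that $\varsigma+\kappa=s\,s_1$ and $\varsigma_1+\kappa_1=\tilde{s}\,s_1$ are even when $s_1$ is even, finishing with the time rescaling $dt=d\tilde{t}/(-1)^{n-p}$; it must also treat $d=1$ separately, since that case has its own normal form \eqref{sys-d1}. Your argument---promote \eqref{alphaPQ} to all real $\alpha$ by the polynomial-identity argument, set $\alpha=-1$, and do the parity bookkeeping---bypasses the canonical form entirely, handles $d>1$ and $d=1$ uniformly, and in fact establishes the symmetry for any weight vector with the stated parities; minimality enters only through Corollary \ref{coro1}, which rules out the even--even case, exactly as you note. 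Your sign computations check out: in the three cases the required congruences between the exponents in the two equations are $s_1+d-1\equiv s_2+d$, $s_1+d\equiv s_2+d-1$, and $s_1+d\equiv s_2+d \pmod 2$, which reduce precisely to ``opposite parities, opposite parities, equal parities,'' matching the hypotheses. What the paper's route buys is coherence with machinery it has already set up (the form \eqref{quasi-sys} is reused throughout Section \ref{s2}) and an explicit view of which monomials can occur; what yours buys is brevity, self-containedness, and the conceptual point that the symmetry is forced by the scaling law itself rather than by the particular monomial structure of the system.
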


\begin{proof}
First we discuss the case $d>1$, $s_1$ is even and $s_2$ is odd  for the quasi--homogeneous system \eqref{sys1}.
Taking the change of variables $(\tilde{x}, \tilde{y})= (x, -y)$, we  rewrite system \eqref{sys1} as
\begin{equation}\label{sys11}
\left(\begin{array}{c}
\dot{x}\\
\dot{y}
\end{array}\right)=
\left(\begin{array}{l}
 (-1)^{n-p}a_{p,n-p}x^py^{n-p} \\
\quad +(-1)^{n-\varsigma-p-\kappa}a_{p+\kappa,n-\varsigma-p-\kappa}x^{p+\kappa}y^{n-\varsigma-p-\kappa}
\\
\quad  +(-1)^{n-\varsigma_1-p-\kappa_1}a_{p+\kappa_1,n-\varsigma_1-p-\kappa_1}x^{p+\kappa_1}y^{n-\varsigma_1-p-\kappa_1}
 \\
 (-1)^{n-p}b_{p-1,n-p+1}x^{p-1}y^{n-p+1}
\\
\quad +(-1)^{n-\varsigma-p-\kappa}b_{p+\kappa-1,n-\varsigma-p-\kappa+1}x^{p+\kappa-1}y^{n-\varsigma-p-\kappa+1}
\\
\quad  +(-1)^{n-\varsigma_1-p-\kappa_1}b_{p+\kappa_1-1,n-\varsigma_1-p-\kappa_1+1}x^{p+\kappa_1-1}y^{n-\varsigma_1-p-\kappa_1+1}
\end{array}\right)
+  \widetilde{X},
\end{equation}
where we still write $\tilde{x}$, $\tilde{y}$ as $x$, $y$ respectively for simplicity to notations.

Now, we only need to prove
 $(-1)^{\varsigma+\kappa}=(-1)^{\varsigma_1+\kappa_1}=1$, i.e., both $\varsigma+\kappa$ and $\varsigma_1+\kappa_1$ are even.
In fact, from calculation of the minimal weight vector  \eqref{tw}  we obtain that
\begin{eqnarray*}
 \varsigma+\kappa=ss_1,\quad  ~~~ \varsigma_1+\kappa_1=\tilde{s}s_1,
\end{eqnarray*}
where $s=\gcd(\varsigma,\kappa)$ and $\tilde{s}=\gcd(\varsigma_1,\kappa_1)$.
These force that both $\varsigma+\kappa$ and $\varsigma_1+\kappa_1$ are even. Besides, the fact that
$X_{n-\varsigma_1}^{p\varsigma_1\kappa_{\varsigma_1}}$ is an arbitrarily chosen homogeneous part of $\tilde{X}$ indicates that
 system \eqref{sys11} is the same as  system \eqref{sys1} after a time rescaling $dt=d\tilde{t}/(-1)^{n-p}$.
This proves that any quasi--homogeneous but non--homogeneous
polynomial differential system is invariant under the change $(x, y)\to (x, -y)$ if $d>1$, $s_1$ is even and $s_2$ is odd.

For the remaining three cases: $d>1$, $s_1$ is odd and $s_2$ is even;   $d>1$, $s_1$ and $s_2$ are odd, and $d=1$, the proof is similar
to the case that $d>1$, $s_1$ is even and $s_2$ is odd. The details are omitted.
\end{proof}

\medskip



We remark that the  least common multiple of $s_1$ and $s_2$ is not  the unique choice for $\beta$,
although Theorem \ref{th-homo} and Theorem  \ref{prop1} give a method showing how to translate a quasi--homogeneous but non--homogeneous
differential system into a homogeneous one. Actually,  we can select any  common multiple of $s_1$ and $s_2$ for $\beta$. Besides, if
the quasi--homogeneous but non--homogeneous differential system with the minimal weight vector $\widetilde{w}=(s_1, s_2, d)$
has spare terms, we could choose $\beta=1$ in \eqref{change1}, which together with a time rescaling
 translates the system to a homogeneous one with degree less than that of the homogeneous system
 translated by using the change \eqref{change1}  with $\beta=\mbox{lcm} (s_1, s_2)$.
  Next examples demonstrate that for some systems we can use the change of variables
\begin{eqnarray} \label{change2}
 \tilde{x}=x^{s_2}, \qquad  ~~ \tilde{y}=y^{s_1}.
\end{eqnarray}

Consider a  quasi--homogeneous  system
\begin{equation}\label{e151018-1}
 \dot{x}=a_{05}y^5+a_{13}xy^3+a_{21}x^2y, ~~
\dot{y}=b_{04}y^4+b_{12}xy^2+b_{20}x^2,
\end{equation}
in $\R _+^2$, with $a_{05}b_{20} \ne0$ and the minimal weight vector $ \widetilde{w}=(2, 1, 4)$.
Taking the change \eqref{change1} with $\beta=\mbox{lcm} (s_1, s_2)$ and the time rescaling $dt=\tilde{x }dt_1$, system \eqref{e151018-1}
is translated to the homogeneous one of degree $5$
\begin{eqnarray*}
 \dot{x}=\left( a_{05}y^5+a_{13}x^2y^3+a_{21}x^4y \right)/2, ~~ \
\dot{y}=b_{04}xy^4+b_{12}x^3y^2+b_{20}x^5.
\end{eqnarray*}
However, system \eqref{e151018-1} is translated to the homogeneous one of degree $2$
\begin{eqnarray*}
 \dot{x}= a_{05}y^2+a_{13}xy+a_{21}x^2, ~~
\quad \dot{y}=2(b_{04}y^2+b_{12}xy+b_{20}x^2),
\end{eqnarray*}
by the change \eqref{change2} together with the time rescaling $dt=dt_1 /\tilde{y}^{\frac{1}{2}} $. Hereafter, we still write $\tilde{x}, \tilde{y}$ as $x, y$ for the change of variables $(x,y)\rightarrow (\widetilde x,\widetilde y)$.

Consider another  quasi--homogeneous  system
\begin{eqnarray*}
\dot{x}=a_{14}xy^4, ~~ \qquad
\dot{y}=b_{05}y^5+b_{40}x^4
\end{eqnarray*}
in $\R _+^2$, with $a_{14}b_{05}b_{40} \ne0$ and the minimal weight vector $ \widetilde{w}=(5,4,17)$. It
is translated to the homogeneous system of degree $20$
\begin{eqnarray*}
 \dot{x}= a_{14} xy^{19}/5, ~~ \qquad
\dot{y}=( b_{05}y^{20}+b_{40}x^{20} )/4,
\end{eqnarray*}
by the change \eqref{change1} with $\beta=\mbox{lcm} (s_1, s_2)$ and  the time rescaling $dt=\tilde{y}^3 dt_1$, and is translated to a linear homogeneous  system
\begin{eqnarray*}
 \dot{x}= 4a_{14}x, ~~ \qquad
\dot{y}=5(b_{05}y+b_{40}x)
\end{eqnarray*}
by the change \eqref{change2} together with the time rescaling $dt=dt_1 /\tilde{y}^{\frac{4}{5}} $.

These examples show that the change \eqref{change1} can always transform a quasi--homogeneous but non--homogeneous differential systems into a homogeneous one,
but $\beta=\mbox{lcm} (s_1, s_2)$ in the change \eqref{change1} may not be a best choice.

The next result, due to Tang et al \cite{TWZ2015}, characterizes all quasi--homogeneous but non--homogeneous quintic
polynomial differential systems.

\begin{lemma}\label{lm-q5list}
Every planar quintic quasi--homogeneous but non--homogeneous
polynomial differential system is one of the following $15$ systems:
{\footnotesize
\begin{eqnarray*}
& X_{011}:  &\dot{x}=a_{05}y^5+a_{13}xy^3+a_{21}x^2y, ~~
\dot{y}=b_{04}y^4+b_{12}xy^2+b_{20}x^2,\\
&& ~{\it with}~ a_{05}b_{20} \ne0 ~ \mbox{\it and the weight vector}~ \widetilde{w}=(2,1,4),
\\
& X_{012}:  &\dot{x}=a_{05}y^5+a_{22}x^2y^2, ~~
\dot{y}=b_{13}xy^3 +b_{30}x^3, \\
&& ~{\it with}~ a_{05}b_{30} \ne0 ~ \mbox{\it and the weight vector}~ \widetilde{w}=(3,2,8),
 \\
 & X_{014}:  &\dot{x}=a_{05}y^5+a_{40}x^4, ~~
\dot{y}=b_{31}x^3y, \\
&& ~{\it with}~ a_{05}a_{40}b_{31} \ne0 ~ \mbox{\it and the weight vector}~
\widetilde{w}=(5,4,16),
 \\
 & X_{015}:  &\dot{x}=a_{05}y^5, ~~
\dot{y}=b_{40}x^4,  \\
&& ~{\it with}~ a_{05}b_{40} \ne0 ~ \mbox{\it and the weight vector}~
\widetilde{w}=(6,5,20),
 \\
& X_{021}:  &\dot{x}=a_{05}y^5+a_{12}xy^2, ~~
\dot{y}=b_{03}y^3+b_{10}x,\\
&& ~{\it with}~ a_{05}b_{10} \ne0 ~ \mbox{\it and the weight vector}~ \widetilde{w}=(3,1,3),
\\
& X_{023}:  &\dot{x}=a_{05}y^5+a_{30}x^3, ~~
\dot{y}=b_{21}x^2y,\\
&& ~{\it with}~ a_{05}a_{30}b_{21} \ne0 ~ \mbox{\it and the weight vector}~
\widetilde{w}=(5,3,11),
\\
& X_{032}:  &\dot{x}=a_{05}y^5+a_{20}x^2, ~~
\dot{y}=b_{11}xy,\\
&& ~{\it with}~ a_{05}a_{20}b_{11} \ne0 ~ \mbox{\it and the weight vector}~
\widetilde{w}=(5,2,6),
\\
& X_{111}:  &\dot{x}=a_{14}xy^4 +a_{22}x^2y^2 +a_{30}x^3, ~~
\dot{y}=b_{05}y^5+b_{13}xy^3+b_{21}x^2y,\\
&& ~{\it with}~ a_{30}b_{05} \ne0 ~ \mbox{\it and the weight vector}~ \widetilde{w}=(2,1,5),
 \\
 & X_{113}:  &\dot{x}=a_{14}xy^4 +a_{40}x^4, ~~
\dot{y}=b_{05}y^5+b_{31}x^3y,\\
&& ~{\it with}~ a_{40}b_{05} \ne0 ~ \mbox{\it and the weight vector}~ \widetilde{w}=(4,3,13),
 \\
 & X_{114}:  &\dot{x}=a_{14}xy^4, ~~
\dot{y}=b_{05}y^5+b_{40}x^4,\\
&& ~{\it with}~ a_{14}b_{05}b_{40} \ne0 ~ \mbox{\it and the weight vector}~
\widetilde{w}=(5,4,17),
 \\
 & X_{123}:  &\dot{x}=a_{14}xy^4, ~~
\dot{y}=b_{05}y^5+b_{30}x^3,\\
&& ~{\it with}~ a_{14} b_{05}b_{30} \ne0 ~ \mbox{\it and the weight vector}~
\widetilde{w}=(5,3,13),
\\
\end{eqnarray*}
\begin{eqnarray*}
& X_{131}:  &\dot{x}=a_{14}xy^4 +a_{20}x^2, ~~
\dot{y}=b_{05}y^5+b_{11}xy,
\\
&& ~{\it with}~ a_{20}b_{05} \ne0 ~ \mbox{\it and the weight vector}~ \widetilde{w}=(4,1,5),
\\
& X_{132}: &\dot{x}=a_{14}xy^4, ~~
\dot{y}=b_{05}y^5+b_{20}x^2,\\
&& ~{\it with}~ a_{14}b_{05}b_{20} \ne0 ~ \mbox{\it and the weight vector}~
\widetilde{w}=(5,2,9),
\\
& X_{141}: &\dot{x}=a_{14}xy^4, ~~
\dot{y}=b_{05}y^5+b_{10}x,\\
&& ~{\it with}~ a_{14}b_{05}b_{10} \ne0 ~ \mbox{\it and the weight vector}~
\widetilde{w}=(5,1,5),
\\
& X_1:  &\dot{x}=a_{05}y^5+a_{10}x, ~~ \dot{y}=b_{01}y, \\
&&~{\it with}~
a_{05}a_{10}b_{01} \ne 0,  ~ \mbox{\it and the weight vector}~ \widetilde{w}=(5,1,1).
\end{eqnarray*}
}
\end{lemma}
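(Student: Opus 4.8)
The plan is to run the algorithm of \cite{Garc2013} underlying \eqref{tw}--\eqref{quasi-sys} with $n=5$, enumerate the finitely many admissible index triples $(p,\varsigma,\kappa)$, and prune them using the standing hypothesis that $P$ and $Q$ share no non-constant common factor. I would separate the two regimes $d=1$ and $d>1$. For $d=1$, I would simply invoke \cite[Proposition 9]{Garc2013}: the minimal weight vector must be $(n,1,1)=(5,1,1)$ and the system is $\dot x=a_{05}y^5+a_{10}x,\ \dot y=b_{01}y$. Here $a_{10}=0$ forces the common factor $y$, $a_{05}=0$ makes the system linear homogeneous, and $b_{01}=0$ makes $Q\equiv0$; so non-homogeneity together with coprimality of $P,Q$ forces $a_{05}a_{10}b_{01}\ne0$, which is exactly $X_1$.

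For $d>1$ I would work from the generic form \eqref{quasi-sys}. The first, decisive reduction is $p\in\{0,1\}$. Indeed, if $p\ge2$ then every monomial of $P$ (arising from the first components $x^{p}y^{5-p}$ and $x^{p+\kappa_i}y^{\cdots}$) is divisible by $x^{p}$, while every monomial of $Q$ (from $x^{p-1}y^{6-p}$ and $x^{p+\kappa_i-1}y^{\cdots}$) is divisible by $x^{p-1}$; hence $x^{p-1}\mid\gcd(P,Q)$ is a non-constant common factor, contradicting our assumption. Next I would reduce to primitive pairs $\gcd(\varsigma,\kappa)=1$: a non-primitive triple reproduces, monomial by monomial, the family already generated by its primitive representative, since the Diophantine condition $\kappa_{\varsigma_1}\varsigma=\kappa\varsigma_1$ defining $V_{p\varsigma\kappa}$ collects exactly the monomials lying on one weight line (for instance $(p,\varsigma,\kappa)=(0,2,2)$ regenerates $X_{011}$). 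This is consistent with $\gcd(s_1,s_2)=1$ established in Theorem \ref{prop1}.

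With $p\in\{0,1\}$ and $\gcd(\varsigma,\kappa)=1$ imposed, only finitely many triples survive, and for each of them $s=1$, so $d=1+(p-1)\varsigma+4\kappa$. I would discard the triples with $d\le1$ (these are either invalid, as in $(0,5,1)$ with $d=0$, or belong to the already-treated regime, as in $(0,4,1)$ with $d=1$), and then, for every remaining triple, write out $X_5^p$, $X_{5-\varsigma}^{p\varsigma\kappa}$ and $V_{p\varsigma\kappa}$ explicitly, using the conventions that coefficients carrying a negative exponent vanish and that $b_{p-1,n-p+1}=0$ when $p=0$. The final filter is once more the common-factor test: a triple is kept exactly when neither $x$ nor $y$ divides the whole field, which in practice means that the field carries both a monomial free of $y$ and one free of $x$. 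This is what distinguishes $(0,1,4)$, which carries the $y$-free term $a_{40}x^4$ and yields $X_{014}$, from $(0,1,3)$, whose field is entirely divisible by $y$; the same computation eliminates $(0,3,1)$, $(1,1,2)$ and $(1,2,1)$. Recording the surviving $14$ triples together with the nonvanishing coefficient conditions needed to keep the weight vector minimal and $P,Q$ coprime produces exactly $X_{011},\dots,X_{141}$, and appending $X_1$ completes the list of $15$.

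I expect the main obstacle to be this last per-triple step: the evaluation of $V_{p\varsigma\kappa}$ followed by the common-factor analysis. Assembling $V_{p\varsigma\kappa}$ requires, for each admissible $\varsigma_1$, solving $\kappa_{\varsigma_1}\varsigma=\kappa\varsigma_1$ inside the range $\kappa_{\varsigma_1}\in\{1,\dots,n-\varsigma_1-p+1\}$ and then discarding the monomials whose exponents turn negative; only after this bookkeeping does the actual shape of $P$ and $Q$ become visible, and only then can one decide whether a coordinate axis divides the field. Carrying out these exclusions with no error, so that no spurious system is retained and none of the genuine $14$ is lost, is the delicate part; the remainder is a mechanical tabulation.
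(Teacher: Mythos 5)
Your proposal is correct, but note that the paper itself never proves this lemma: it is quoted as a known result from Tang, Wang and Zhang \cite{TWZ2015}, so there is no in-paper argument to compare against, and your enumeration genuinely supplies a proof where the paper offers only a citation. Your route is the natural one given the machinery recalled in Section \ref{s2}: work from the canonical form \eqref{quasi-sys} with the weight data \eqref{tw}, and all of your key reductions check out. The coprimality argument forcing $p\in\{0,1\}$ is sound (every monomial of $P$ is divisible by $x^{p}$ and every monomial of $Q$ by $x^{p-1}$); the reduction to $\gcd(\varsigma,\kappa)=1$ is legitimate because the monomial support of $X_{p\varsigma\kappa}$ depends only on the ray $\kappa_{\varsigma_1}\varsigma=\kappa\varsigma_1$, so a non-primitive triple generates the same family as its primitive representative (your example $(0,2,2)$ regenerating $X_{011}$ is correct); with $s=1$, $n=5$ the formula $d=1+(p-1)\varsigma+4\kappa$ indeed discards $(0,5,1)$ (where $d=0$) and $(0,4,1)$ (where $d=1$, absorbed into the $X_1$ case via \cite[Proposition 9]{Garc2013}); and the common-factor filter eliminates exactly $(0,1,3)$, $(0,3,1)$, $(1,1,2)$ and $(1,2,1)$, in each of which the whole field is identically divisible by $y$, leaving the fourteen triples matching $X_{011}$ through $X_{141}$ and the total of fifteen. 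One caution on your last step: testing whether $x$ or $y$ divides the field decides only which triples give identically degenerate families, and the listed inequalities (e.g.\ $a_{05}b_{20}\ne0$ for $X_{011}$) are \emph{necessary} consequences of quinticity and coprimality, not sufficient for coprimality --- special coefficient choices inside a retained family can still produce a non-monomial common factor such as $y^2-x$ dividing both $P$ and $Q$; such systems are excluded by the standing coprimality hypothesis of \eqref{quasi5-general} rather than by the displayed conditions. Since the lemma only asserts that every quintic quasi-homogeneous non-homogeneous system takes one of the fifteen forms, this does not affect your conclusion, but the phrasing should make clear which direction of implication is being used.
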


Next result presents the homogeneous differential systems which are transformed from all quintic quasi--homogeneous but non--homogeneous
polynomial differential systems in Lemma \ref{lm-q5list}.


\begin{theorem}\label{th-homo-1}
Any quintic quasi--homogeneous but non--homogeneous polynomial differential system when restricted to either $y>0$ or $x>0$
can be transformed to one of the following four homogeneous systems
{\footnotesize
\begin{eqnarray*}
& \mathcal{H}_3:  &\dot{x}=x(c_{12}y^2 +c_{21}xy +c_{30}x^2), ~~
\dot{y}=y(d_{03}y^2+d_{12}xy+d_{21}x^2),
\\
&& ~{\it with}~ c_{30}d_{03} \ne0,
\\
& \mathcal{H}_2:  & \dot{x}= c_{02}y^2+c_{11}xy+c_{20}x^2, ~~
\dot{y}=d_{02}y^2+d_{11}xy+d_{20}x^2,\\
&& ~{\it with}~ c_{02}d_{20} \ne0, ~{\it or}~c_{02}=d_{20}=0, c_{20}d_{02} \ne0,
\\
& \mathcal{H}_1:  &\dot{x}=c_{01}y+c_{10}x, ~~
\dot{y}=d_{01}y +d_{10}x, \\
&& ~{\it with}~ c_{01}d_{10} \ne0,  ~{\it or}~d_{10}=0, c_{01}c_{10}d_{01} \ne0, ~{\it or}~c_{01}=0, c_{10}d_{01}d_{10}\ne 0,
 \\
 & \mathcal{H}_0:  &\dot{x}=c_0, ~~
\dot{y}=d_0,    ~{\it with}~ c_0d_0 \ne0.
\end{eqnarray*}
}
\end{theorem}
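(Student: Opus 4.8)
The plan is to verify the statement directly on the fifteen systems listed in Lemma \ref{lm-q5list}, applying to each the change of variables \eqref{change1} with $\beta=1$ --- that is, the change \eqref{change2} given by $\tilde x=x^{s_2}$, $\tilde y=y^{s_1}$ --- followed by a suitable time rescaling, exactly as in the two worked examples preceding the statement. By Theorem \ref{th-homo} such a change always produces a homogeneous system, and choosing $\beta=1$ instead of $\beta=\mathrm{lcm}(s_1,s_2)$ is what yields the lowest possible degree, as the remark after Theorem \ref{prop1} explains. Since at least one of $s_1,s_2$ is odd by Corollary \ref{coro1}, at most one of the maps $x\mapsto x^{s_2}$, $y\mapsto y^{s_1}$ fails to be injective on all of $\R$, which is precisely why the statement restricts to the half--plane $x>0$ or $y>0$: there the change is a genuine homeomorphism onto its image and the rescaling factor introduced below is strictly positive.

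First I would record the general mechanism, to be run uniformly through each case. Under $\tilde x=x^{s_2}$, $\tilde y=y^{s_1}$ one has $\dot{\tilde x}=s_2x^{s_2-1}P$ and $\dot{\tilde y}=s_1y^{s_1-1}Q$. For a monomial $a\,x^iy^j$ of $P$, quasi--homogeneity gives $is_1+js_2=s_1+d-1$, and for a monomial $b\,x^{i'}y^{j'}$ of $Q$ it gives $i's_1+j's_2=s_2+d-1$. Reducing these modulo $s_2$ (resp. $s_1$) and using that $s_1,s_2$ are coprime, as shown inside the proof of Theorem \ref{prop1}, one finds that the $x$--exponent $s_2-1+i$ occurring in $\dot{\tilde x}$ and the $x$--exponent $i'$ occurring in $\dot{\tilde y}$ lie in a single residue class mod $s_2$, and symmetrically the $y$--exponents lie in a single class mod $s_1$. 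Hence a single monomial $G=x^{a_0}y^{b_0}$, the greatest common monomial divisor of all terms of both components, can be factored out, and after the rescaling $dt=G\,dt_1$ every surviving exponent becomes a genuine multiple of $s_2$ in $x$ and of $s_1$ in $y$, i.e. a nonnegative integer power of $\tilde x$ or $\tilde y$. The result is a polynomial system, homogeneous by Theorem \ref{th-homo}.

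Then I would run this mechanism through the fifteen systems and read off the common degree, which in every quintic case comes out to be $0$, $1$, $2$ or $3$. For instance $X_{111}$ has $G=1$ and lands on $\mathcal{H}_3$ with $x\mid\dot x$ and $y\mid\dot y$; $X_{011}$ gives $G=y$ and lands on $\mathcal{H}_2$; $X_{012}$ gives $G=xy^2$ while $X_1$ gives $G=1$, both landing on $\mathcal{H}_1$; and $X_{015}$ gives $G=x^4y^5$ and lands on $\mathcal{H}_0$. In each case the nonvanishing hypotheses of Lemma \ref{lm-q5list}, such as $a_{05}b_{20}\ne0$ or $a_{30}b_{05}\ne0$, transcribe exactly into the coefficient conditions attached to $\mathcal{H}_3,\mathcal{H}_2,\mathcal{H}_1,\mathcal{H}_0$.

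The routine part is the fifteen computations, but the step deserving genuine care is matching the coefficient conditions: I must check that every one of the fifteen systems produces a homogeneous field lying in one of the four listed families and in no other, and in particular that the disjunctive (``or'') conditions in $\mathcal{H}_1$ and $\mathcal{H}_2$ are precisely those realized by the sparse members of the list --- for example $X_1$ forces $d_{10}=0$ and $X_{141}$ forces $c_{01}=0$, each reflecting a leading coefficient such as $b_{p-1,n-p+1}$ that is constrained to vanish. Carrying out this matching exhaustively, together with confirming in each case that the factor $G$ is common to both components so that a single time rescaling suffices, is where the real bookkeeping lies.
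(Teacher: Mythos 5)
Your proposal is correct and takes essentially the same route as the paper: the paper's proof likewise applies the change \eqref{change2} (i.e.\ \eqref{change1} with $\beta=1$) together with a monomial time rescaling to each of the fifteen systems of Lemma \ref{lm-q5list}, lists the resulting systems $\widetilde X_{ijk}$ and $\widetilde X_1$, and observes that each falls into $\mathcal H_3$, $\mathcal H_2$, $\mathcal H_1$ or $\mathcal H_0$. Your residue-class argument (exponents of both components lying in a single class mod $s_2$ in $x$ and mod $s_1$ in $y$, so a single common monomial factor can be removed) is a correct, and welcome, elaboration of what the paper compresses into ``some calculations show.''
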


\begin{proof}
Using the change \eqref{change2} and the programm in the proof of Theorem \ref{th-homo}, some calculations show that the quintic quasi--homogeneous system $X_{ijk}$ in Lemma \ref{lm-q5list} can be correspondingly transformed into a homogeneous one, denoted by $\widetilde{X}_{ijk}$ or $\widetilde{X}_1$.

\medskip

{\footnotesize
\begin{eqnarray*}
& \widetilde{X}_{011}:  & \dot{x}= a_{05}y^2+a_{13}xy+a_{21}x^2, ~~
\dot{y}=2(b_{04}y^2+b_{12}xy+b_{20}x^2),\\
&& ~{\it with}~ a_{05}b_{20} \ne0, ~ {\it by}~ dt=dt_1 /y^{\frac{1}{2}}, ~y>0,
\\
& \widetilde{X}_{012}:  &\dot{x}=2(a_{05}y+a_{22}x), ~~
\dot{y}=3(b_{13}y +b_{30}x), \\
&& ~{\it with}~ a_{05}b_{30} \ne0, ~ {\it by}~ dt= dt_1/(x^{\frac{1}{2}}y^{\frac{2}{3}}), ~x>0,
 \\
 & \widetilde{X}_{014}:  &\dot{x}=4(a_{05}y+a_{40}x), ~~
\dot{y}=5b_{31}y, \\
&& ~{\it with}~ a_{05}a_{40}b_{31} \ne0, ~ {\it by}~ dt= dt_1/x^{\frac{3}{4}},~x>0,
 \\
 & \widetilde{X}_{015}:   &\dot{x}=5a_{05}, ~~
\dot{y}=6b_{40},  \\
&& ~{\it with}~ a_{05}b_{40} \ne0, ~ {\it by}~ dt= dt_1/(x^{\frac{4}{5}}y^{\frac{5}{6}}),~y>0,
 \\
& \widetilde{X}_{021}:  &\dot{x}=a_{05}y +a_{12}x, ~~
\dot{y}=3(b_{03}y+b_{10}x),\\
&& ~{\it with}~ a_{05}b_{10} \ne0, ~ {\it by}~ dt= dt_1/y^{\frac{2}{3}},
\\
& \widetilde{X}_{023}:  &\dot{x}=3(a_{05}y+a_{30}x), ~~
\dot{y}=5b_{21}y,\\
&&~{\it with}~ a_{05}a_{30}b_{21} \ne0, ~ {\it by}~ dt= dt_1/x^{\frac{2}{3}},
\\
\end{eqnarray*}
\begin{eqnarray*}
& \widetilde{X}_{032}:  &\dot{x}=2(a_{05}y+a_{20}x), ~~
\dot{y}=5b_{11}y,\\
&&~{\it with}~ a_{05}a_{20}b_{11} \ne0, ~ {\it by}~ dt= dt_1/x^{\frac{1}{2}}, ~x>0,
\\
& \widetilde{X}_{111}:  &\dot{x}=a_{14}xy^2 +a_{22}x^2y +a_{30}x^3, ~~
\dot{y}=2(b_{05}y^3+b_{13}xy^2+b_{21}x^2y),\\
&& ~{\it with}~ a_{30}b_{05} \ne0, ~y>0,
 \\
 & \widetilde{X}_{113}:  &\dot{x}=3x(a_{14}y +a_{40}x), ~~
\dot{y}=4y(b_{05}y+b_{31}x),\\
&& ~{\it with}~ a_{40}b_{05} \ne0, ~y>0,
 \\
 & \widetilde{X}_{114}:  &\dot{x}= 4a_{14}x, ~~
\dot{y}=5(b_{05}y+b_{40}x)
\\
&& ~{\it with}~ a_{14}b_{05}b_{40} \ne0,~{\it by}~ dt=dt_1 /y^{\frac{4}{5}},   ~x>0,
 \\
 & \widetilde{X}_{123}:  &\dot{x}=3a_{14}x, ~~
\dot{y}=5(b_{05}y+b_{30}x),\\
&&~{\it with}~ a_{14} b_{05}b_{30} \ne0, ~ {\it by}~ dt= dt_1/y^{\frac{4}{5}},
\\
& \widetilde{X}_{131}:  &\dot{x}=a_{14}xy +a_{20}x^2, ~~
\dot{y}=4(b_{05}y^2+b_{11}xy),
\\
&& ~{\it with}~ a_{20}b_{05} \ne0, ~y>0,
\\
& \widetilde{X}_{132}: &\dot{x}=2a_{14}x, ~~
\dot{y}=5(b_{05}y+b_{20}x),\\
&& ~{\it with}~ a_{14}b_{05}b_{20} \ne0, ~ {\it by}~ dt= dt_1/y^{\frac{4}{5}}, ~x>0,
\\
& \widetilde{X}_{141}: &\dot{x}=a_{14}x, ~~
\dot{y}=5(b_{05}y+b_{10}x),
\\
&& ~{\it with}~ a_{14}b_{05}b_{10} \ne0, ~ {\it by}~ dt= dt_1/y^{\frac{4}{5}},
\\
& \widetilde{X}_1:  ~ &\dot{x}=a_{05}y+a_{10}x , ~~ \dot{y}=5b_{01}y,
\\
&& ~{\it with}~ a_{05}a_{10}b_{01} \ne 0,
\end{eqnarray*}
}
where we still use $x, y$ replacing $\widetilde{x}, \widetilde{y}$ for simplifying notations.
Note that each $\widetilde{X}_{ijk}$ or $\widetilde{X}_1$ is one of the  four systems in the theorem. We complete the proof of the theorem.
\end{proof}

\bigskip


\section{Properties of homogeneous vector fields}
\label{s3}


For studying topological phase portraits of the quintic quasi--homogeneous but non--homogeneous systems via Theorem \ref{th-homo-1}, we need the knowledge on homogeneous polynomial differential system. Here we summarize some of them for the next dynamical analysis of quasi--homogeneous differential systems.

For a planar homogeneous polynomial vector field of degree $n\in \mathbb N\setminus\{1\}$, its origin is a highly degenerate singularity. For studying its local dynamics around the origin, the blow--up technique is useful. Commonly, we can blow up a degenerate singularity
in several less degenerate singularities either on a cycle or in a line. Here, we choose the latter, which can be applied to the singularities both in the finite plane and at the infinity.

Consider the planar homogeneous polynomial vector field of degree $n>1$
\begin{equation} \label{homo-n}
 \mathcal{H}_n:  \dot{x}=\sum_{i+j=n}~ c_{ij}x^iy^j:=P_n(x,y), ~~~ \dot{y}=\sum_{i+j=n}~ d_{ij}x^iy^j:=Q_n(x,y),
 \end{equation}
with $c_{ij}, d_{ij}\in\mathbb R$ not all equal to zero, and $P_n$ and $Q_n$ coprime.
The  origin is the unique singularity of system $\mathcal H_n$ in $\R^2$.
The change of variables
\begin{equation}
x=x, ~~ y=ux,
 \label{blow-up}
 \end{equation}
transforms system \eqref{homo-n} into
\begin{equation}
\begin{cases}
  \dfrac{dx}{d\tau} =x P_n(1,u)=x \sum_{i+j=n}~ c_{ij}u^j,
  \\
  \dfrac{du}{d\tau} =G(1, u)=\sum_{i+j=n}~ (d_{ij}u^j-c_{ij}u^{j+1} ),
\end{cases}
 \label{sys-xu}
 \end{equation}
after the time rescaling $dt=d\tau/x^{n-1}$, where
\begin{equation}
G(x,y):=x Q_n(x, y)-y P_n(x, y).
 \label{Gxy}
 \end{equation}

Recall that \eqref{blow-up} is the Briot--Bouquet transformation, see e.g. \cite{Bri1856}, which decomposes the origin of system \eqref{homo-n}
 into several simpler ones located on the $u$--axis of system \eqref{sys-xu}. That is, the zeros of $G(1, u)$ determine the equilibria located on the $u$-axis of system \eqref{sys-xu},
which correspond to the characteristic directions of system \eqref{homo-n} at the origin.

In \cite{Cima1990} Cima and Llibre presented a classification of the singularity at the origin of system \eqref{homo-n}. Here we will provide more detail conditions for characterizing the singularity.

Suppose that there exists a $u_0$ satisfying $G(1, u_0)=0$. Then $E=(0,u_0)$ is a singularity of system \eqref{sys-xu}
and system \eqref{homo-n} has the characteristic direction $\theta=\arctan (u_0)$ at the origin.
Besides, $P_n(1,u_0)\ne 0$ since $G(1,u_0)=0$. Otherwise, $y-u_0 x$ is a common factor of $Q_n(x, y)$ and $P_n(x, y)$, a contradiction with the fact that  $Q_n(x, y)$ and $P_n(x, y)$ are coprime. In the following we denote by $G'(1,u_0)$ the derivative with respect to $u$ at $u_0$.

\begin{proposition} \label{th-BU}
For the singularity $E=(0,u_0)$ of system \eqref{sys-xu}, the following statements hold.
\begin{itemize}
\item[$(a)$] For $G'(1,u_0)\ne 0$, the singularity $E$ is
\begin{itemize}
\item either a saddle if $P_n(1, u_0) G'(1,u_0)<0$,
\item or a node if $P_n(1, u_0) G'(1,u_0)>0$.
\end{itemize}
\item[$(b)$] For $G'(1,u_0)=0$, if $u_0$ is a zero of multiplicity $m>1$ of $G(1,u)$, then $E$ is
\begin{itemize}
\item  either a saddle if $m$ is odd and  $P_n(1, u_0) G^{(m)}(1,u_0)<0$,
\item or  a node if $m$ is odd and  $P_n(1, u_0) G^{(m)}(1,u_0)>0$,
\item  or a saddle--node if  $m$ is even.
\end{itemize}
\end{itemize}
\end{proposition}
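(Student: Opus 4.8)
The plan is to compute the linear part of system \eqref{sys-xu} at $E=(0,u_0)$ and then split into the hyperbolic case (a) and the semi--hyperbolic case (b). First I would write down the Jacobian matrix of the vector field $\big(xP_n(1,u),\,G(1,u)\big)$; differentiating gives
\[
\begin{pmatrix} P_n(1,u) & x\,\partial_u P_n(1,u) \\[2pt] 0 & G'(1,u) \end{pmatrix},
\]
so at $E=(0,u_0)$ it reduces to the diagonal matrix $\mathrm{diag}\big(P_n(1,u_0),\,G'(1,u_0)\big)$. The two eigenvalues are therefore $\lambda_1=P_n(1,u_0)$, nonzero by the observation preceding the proposition, and $\lambda_2=G'(1,u_0)$. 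This single computation drives both parts of the statement.

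For part (a), when $G'(1,u_0)\neq0$ both eigenvalues are real and nonzero, so $E$ is hyperbolic and, by the Hartman--Grobman theorem, its local phase portrait equals that of its linear part. The product of the eigenvalues is $P_n(1,u_0)G'(1,u_0)$: a negative product forces opposite signs and hence a saddle, while a positive product forces equal signs and hence a node. Since the matrix is diagonal with real entries there is no focus to worry about, so this yields (a) immediately.

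For part (b), when $G'(1,u_0)=0$ the point $E$ is semi--hyperbolic, carrying the nonzero eigenvalue $P_n(1,u_0)$ in the $x$--direction and a zero eigenvalue in the $u$--direction. The structural feature I would exploit is that the line $\{x=0\}$, the exceptional divisor of the blow--up, is invariant because $\dot x=xP_n(1,u)$, and it is tangent to the eigenvector of the zero eigenvalue; it thus serves as a center manifold, on which the reduced one--dimensional flow is exactly $\dot v=G(1,u_0+v)$ with $v:=u-u_0$. Since $u_0$ is a zero of $G(1,\cdot)$ of multiplicity $m>1$, this reads $\dot v=\tfrac{1}{m!}G^{(m)}(1,u_0)\,v^m+O(v^{m+1})$. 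I would also note that $E$ is isolated: the argument preceding the proposition shows that $P_n(1,\cdot)$ and $Q_n(1,\cdot)$ share no zero, which excludes equilibria off the $u$--axis, while the zeros of $G(1,\cdot)$ on the axis are isolated. The standard classification of semi--hyperbolic singular points then applies and, for a positive hyperbolic eigenvalue, produces a node when $m$ is odd with positive leading center coefficient, a saddle when $m$ is odd with negative coefficient, and a saddle--node when $m$ is even.

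The remaining and most delicate step is to drop the positivity assumption on the hyperbolic eigenvalue and to state the result uniformly. When $P_n(1,u_0)<0$ I would reverse time, which flips the sign of the hyperbolic eigenvalue and at the same time flips the sign of the center coefficient $G^{(m)}(1,u_0)/m!$. Because the saddle, node and saddle--node types are each invariant under time reversal (only the stability labels are interchanged), the two sign changes combine so that the classification depends solely on the sign of the product $P_n(1,u_0)G^{(m)}(1,u_0)$ when $m$ is odd, and is a saddle--node whenever $m$ is even. Verifying that this bookkeeping reproduces precisely the three bullet points of (b) is the heart of the argument; everything else is the routine linear and center--manifold computation above.
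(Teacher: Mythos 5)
Your proposal is correct and follows essentially the same route as the paper: the same diagonal Jacobian $\mathrm{diag}\bigl(P_n(1,u_0),\,G'(1,u_0)\bigr)$ settles part $(a)$, and part $(b)$ is reduced to the standard classification of semi--hyperbolic singularities on the invariant line $x=0$, exactly as the paper does by citing \cite[Theorem 7.1, Chapter 2]{ZDHD}. The only cosmetic difference is how the sign of the hyperbolic eigenvalue is normalized --- you reverse time when $P_n(1,u_0)<0$, while the paper rescales time by $1/P_n(1,u_0)$ --- and these are equivalent bookkeeping devices leading to the same dependence on the sign of $P_n(1,u_0)G^{(m)}(1,u_0)$.
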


 \begin{proof}
 An easy computation shows that the Jacobian matrix of  system  (\ref{sys-xu}) at singularity $E=(0,u_0)$ is
\begin{eqnarray*}
J(E)=\begin{pmatrix}
P_n(1, u_0) & 0\\
0 & G'(1,u_0)
\end{pmatrix}.
\end{eqnarray*}
Then statement $(a)$ follows from this expression of $J(E)$.

When $G(1,u_0)=G'(1,u_0)=0$, we get  $P_n(1,u_0)\ne 0$. In this case, the matrix $J(E)$ has the eigenvalues $0$ and $P_n(1,u_0)$.
Moving $E: (0,u_0)$ to the origin and taking the time rescaling $d\tau=d\tau_1/P_n(1, u_0)$,  system \eqref{sys-xu} can be rewritten as
 \begin{equation}  \label{sys-xu2}
\begin{cases}
\dfrac{dx}{d\tau_1} =x P_n(1,u+u_0)/P_n(1, u_0)=x \sum\limits_{i+j=n}~ c_{ij}(u+u_0)^j/P_n(1, u_0),
  \\
\dfrac{du}{d\tau_1} =G(1, u+u_0)/P_n(1, u_0)=\sum\limits_{i+j=n}~ (d_{ij}-c_{ij}(u+u_0) )(u+u_0)^j/P_n(1, u_0).
\end{cases}
 \end{equation}
Notice that $x=0$ is invariant under the flow of system \eqref{sys-xu2} and the linear part of the system is in canonical form.
Since $u_0$ is a zero of multiplicity $m$ of $G(1,u)$, we get from \cite[Theorem 7.1, Chapter 2]{ZDHD} the conclusion of statement $(b)$. This proves the proposition.
 \end{proof}

\medskip

Note that the change \eqref{blow-up} is singular on $x=0$. We now determine the number of orbits which
approach origin and are tangent to the $y$--axis.

Applying the polar coordinate changes $x=r \cos\theta$ and $y=r \sin\theta$, system \eqref{homo-n} can be written in
\begin{eqnarray}
 \frac{1}{r}\frac{dr}{d\theta}=\frac{\widetilde{H}(\theta)}{\widetilde{G}(\theta)},
 \label{H/G}
\end{eqnarray}
where
\begin{eqnarray*}
\widetilde{G}(\theta)&=&G(\cos\theta, \sin\theta),\\
\widetilde{H}(\theta)&=&H(\cos\theta, \sin\theta), ~~~~
\\
 H(x, y)& := & yQ_n(x,y)+xP_n(x,y).
\end{eqnarray*}
Hence a necessary condition for $\theta=\theta_0$ to be a characteristic  direction of the origin is $G(\cos\theta_0, \sin\theta_0)=0$.

If   $u_0$  is a root of the equation $G(1, u)=0$, then $H(1,u_0) \ne 0$ since
 \begin{eqnarray}
 H(1,u_0)=(1+u_0^2) P_n(1,u_0),
 \label{H=P}
\end{eqnarray}
and  $P_n(1,u_0)\ne 0$  from the fact that  $Q_n(x, y)$ and $P_n(x, y)$ are coprime.

The next proposition provides the properties of system \eqref{homo-n} at the origin along the characteristic  direction $\theta=\frac{\pi}{2}$,
i.e., $G(v, 1)=0$ at $v=0$, where $v=x/y$.

\medskip

\begin{proposition}
\label{th-y}
Assume that $\theta=\frac{\pi}{2}$ is a zero of multiplicity $m$ of $\widetilde{G}(\theta)$. The following statements hold.
\begin{itemize}
\item If $m>0$ is even, there exist infinitely many orbits connecting the origin of system \eqref{homo-n} and being tangent to the $y$--axis at the origin.
\item If $m$ is odd, there exist either infinitely many orbits if
$
\widetilde{G}^{(m)}(\frac{\pi}{2})\widetilde{H}(\frac{\pi}{2})> 0,
$
or exactly one orbit if
$
\widetilde{G}^{(m)}(\frac{\pi}{2})\widetilde{H}(\frac{\pi}{2})<0,
$
connecting the origin of  system \eqref{homo-n} and being tangent to the $y$--axis at the origin.
\end{itemize}
\end{proposition}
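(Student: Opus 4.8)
The plan is to analyze the scalar equation \eqref{H/G} in a punctured neighbourhood of the characteristic direction $\theta=\frac{\pi}{2}$ and to read off, from the asymptotics of $r(\theta)$ as $\theta\to\frac{\pi}{2}$, how many orbits of \eqref{homo-n} reach the origin tangentially to the $y$--axis. First I would record two facts. Since $\widetilde{G}(\frac{\pi}{2})=G(0,1)=-P_n(0,1)=0$, the line $x=0$ is invariant for \eqref{homo-n} (indeed $\dot x|_{x=0}=P_n(0,y)=y^nP_n(0,1)=0$), so the $y$--axis is itself an orbit tangent to the $y$--axis at the origin. Next, $\widetilde{H}(\frac{\pi}{2})=H(0,1)=Q_n(0,1)\ne0$: if it vanished, then together with $P_n(0,1)=0$ both $P_n$ and $Q_n$ would be divisible by $x$, contradicting their coprimality (this is the vertical analogue of \eqref{H=P}).

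Next I would localize. Writing $\theta=\frac{\pi}{2}+\phi$ and using that $\widetilde{G}$ and $\widetilde{H}$ are trigonometric polynomials, hence real analytic, the hypothesis that $\theta=\frac{\pi}{2}$ is a zero of multiplicity $m$ of $\widetilde{G}$ gives
\[
\widetilde{G}\Big(\tfrac{\pi}{2}+\phi\Big)=B\,\phi^{m}\big(1+O(\phi)\big),\qquad \widetilde{H}\Big(\tfrac{\pi}{2}+\phi\Big)=A\big(1+O(\phi)\big),
\]
with $B=\widetilde{G}^{(m)}(\frac{\pi}{2})/m!\ne0$ and $A=\widetilde{H}(\frac{\pi}{2})\ne0$. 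Substituting into \eqref{H/G} yields the separable model
\[
\frac{d\,\ln r}{d\phi}=\frac{A}{B}\,\phi^{-m}\big(1+O(\phi)\big),
\]
valid for $0<|\phi|$ small. Integrating this explicitly is the core computation: for $m=1$ one gets $r(\phi)=K|\phi|^{A/B}e^{\Psi(\phi)}$ with $\Psi$ analytic and $K>0$ the integration constant, while for $m\ge2$ the integral is dominated by the singular term $-\frac{A}{B(m-1)}\phi^{-(m-1)}$, so that $\ln r(\phi)=-\frac{A}{B(m-1)}\phi^{-(m-1)}+(\text{less singular})+\ln K$.

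From these formulas I would read off, on each side $\phi\to0^{\pm}$, whether $r\to0$. When $m$ is odd the power $m-1$ (or, for $m=1$, the logarithm) has even parity, so the decisive quantity is the sign of $A/B$: if $\widetilde{G}^{(m)}(\frac{\pi}{2})\widetilde{H}(\frac{\pi}{2})=AB>0$ then $r\to0$ for every $K$ on both sides, producing a one--parameter family of distinct orbits tangent to the $y$--axis, i.e. infinitely many; if $AB<0$ the generic solution has $r\to\infty$, so no member of the family reaches the origin and the only orbit tangent to the $y$--axis is the invariant axis $x=0$, i.e. exactly one. When $m$ is even the parity of $m-1$ is odd, so the singular term tends to $-\infty$ on exactly one of the two sides whatever the sign of $A/B$; on that side $r\to0$ for every $K$, again a one--parameter family, hence infinitely many orbits. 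This reproduces the three alternatives in the statement. (Equivalently, one may run the blow--up $x=vy$, note that $v=0$ is a zero of multiplicity $m$ of $G(v,1)$ with the transverse eigenvalue $\widetilde{H}(\frac{\pi}{2})$ along the invariant axis, and invoke the same classification as in Proposition \ref{th-BU}.)

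The main obstacle is not the integration but the passage from the scalar analysis back to genuine orbits: I must justify that a solution $r(\phi)$ of \eqref{H/G} with $r\to0$ as $\phi\to0$ extends to a bona fide orbit of \eqref{homo-n} accumulating at the origin with limiting direction exactly $\frac{\pi}{2}$, that distinct integration constants $K$ label distinct orbits, and---crucially for the ``exactly one'' case---that when the family escapes ($r\to\infty$) no additional orbit can approach tangentially besides the invariant axis $x=0$. Establishing this rigorously requires the blow--up and normal--sector machinery (e.g. \cite[Chapter 2]{ZDHD}) to control the orbit structure in a neighbourhood of the degenerate direction and to exclude spiralling or oscillatory accumulation; this is the step I would write most carefully.
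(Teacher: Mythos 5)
Your proposal is correct, and its skeleton coincides with the paper's proof, which is very short: the paper also works from the polar equation \eqref{H/G}, notes $\widetilde{G}^{(m)}(\frac{\pi}{2})\ne 0$ from the multiplicity hypothesis, proves $\widetilde{H}(\frac{\pi}{2})=H(0,1)\ne 0$ from $G(0,1)=0$ together with the coprimality of $P_n$ and $Q_n$ (exactly your argument), and then concludes in one stroke by citing the normal--sector theorems, \cite[Theorems 4--6, Chapter 5]{Sans} or \cite[Theorems 3.4, 3.7 and 3.8, Chapter 2]{ZDHD}. What you do differently is the middle: you unpack what those theorems assert in this analytic situation by explicitly integrating \eqref{H/G} near $\theta=\frac{\pi}{2}$ and reading the three alternatives off the parity of $m-1$ and the sign of the product $\widetilde{G}^{(m)}(\frac{\pi}{2})\widetilde{H}(\frac{\pi}{2})$, whereas the paper performs no computation at all. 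Your route makes the mechanism visible --- in particular, your observation that $x=0$ is invariant identifies the unique orbit in the case $\widetilde{G}^{(m)}(\frac{\pi}{2})\widetilde{H}(\frac{\pi}{2})<0$ concretely as the $y$--axis itself, something the paper leaves implicit --- but, as you honestly note, converting the asymptotics of solutions of the scalar equation into statements about orbits (distinctness of the one--parameter family, the exclusion argument in the ``exactly one'' case, absence of spiralling accumulation) is precisely the content of the Frommer/normal--sector theory, so your closing appeal to \cite[Chapter 2]{ZDHD} lands on the very citations that constitute the paper's entire proof. In short: same hypotheses, verified the same way, and the same ultimate authority invoked; you additionally supply the explicit asymptotic computation (and the equivalent blow--up formulation via Proposition \ref{th-BU}) that the paper delegates wholesale to the references.
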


\begin{proof}
Since $\theta=\frac{\pi}{2}$ is a zero of multiplicity $m$ of $\widetilde G(\theta)$, we have $\widetilde{G}^{(m)}(\frac{\pi}{2})\ne 0$. In addition, we have
\[
\widetilde{H}(\frac{\pi}{2})= H(\cos\frac{\pi}{2}, \sin\frac{\pi}{2})=H(0, 1)\ne0,
\]
where we have used the facts that $G(0, 1)=0$ and that $Q_n(x, y)$ and $P_n(x, y)$ have no a common factor.

Applying the results on normal sectors, see e.g.  \cite[Theorems 4--6, Chapter 5]{Sans} or \cite[Theorems 3.4, 3.7 and 3.8, Chapter 2]{ZDHD},
we obtain that either  infinitely many orbits connect the origin of system \eqref{homo-n} along the characteristic direction $y$--axis if
 $m$ is even, or $m$ is odd and
$
\widetilde{G}^{(m)}(\frac{\pi}{2})\widetilde{H}(\frac{\pi}{2})> 0,
$
or exact one orbit connects the origin of  system \eqref{homo-n} along the direction $y$--axis
if $m$ is odd and
$
\widetilde{G}^{(m)}(\frac{\pi}{2})\widetilde{H}(\frac{\pi}{2})<0.
$
\end{proof}

\medskip


The next result, due to Cima and Llibre \cite{Cima1990}, will be used later on.

\begin{lemma}\label{lm-inf}
For the homogeneous  system \eqref{homo-n}, the following statements hold.
\begin{itemize}
\item The vanishing set of each linear factor $($if exists$)$ of $G(x,y)$ is an invariant line of the homogeneous  system \eqref{homo-n}.
\item Each singularity of system \eqref{homo-n} at infinity is either a node, or a saddle, or a saddle--nodes. The saddle--node happens if and only if
the corresponding root $u_0$ of the equation $G_n(1,u)=0$  is of even multiplicity.
\item The origin  of  system \eqref{homo-n}
is a global center if and only if
\[
\int_{-\infty}^{\infty} ~\frac{P_n(1,u)}{G_n(1,u)}~du=0.
\]
\end{itemize}
\end{lemma}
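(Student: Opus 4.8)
The plan is to treat the three assertions separately, reusing the blow-up picture of \eqref{sys-xu} and the polar form \eqref{H/G}: the first item is purely algebraic, the second is a Poincar\'e--compactification computation running exactly parallel to Proposition \ref{th-BU}, and the third is an integration of \eqref{H/G} over one period combined with a partial-fraction identity. For the first statement, suppose $y-u_0x$ is a linear factor of $G(x,y)=xQ_n(x,y)-yP_n(x,y)$ (the factor $x$ is handled the same way). I would restrict $G$ to the line $\ell:\ y=u_0x$: since $G(x,u_0x)=x\bigl(Q_n(x,u_0x)-u_0P_n(x,u_0x)\bigr)$ vanishes identically, we get $Q_n-u_0P_n\equiv0$ on $\ell$. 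This is precisely the statement that $\frac{d}{dt}(y-u_0x)=Q_n-u_0P_n$ vanishes on $\ell$, i.e.\ the vector field is tangent to $\ell$, so $\ell$ is invariant; the vertical factor $x\mid G$ forces $P_n(0,y)\equiv0$, whence $\dot x=0$ on $x=0$.

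For the second statement I would pass to the Poincar\'e compactification. In the chart $x=1/v$, $y=u/v$, homogeneity of degree $n$ gives, after the time rescaling $d\tau=v^{-(n-1)}dt$,
\[
\frac{du}{d\tau}=G(1,u),\qquad \frac{dv}{d\tau}=-v\,P_n(1,u),
\]
so the infinite singularities lie on $v=0$ at the zeros $u_0$ of $G(1,u)$, exactly the characteristic directions. The Jacobian there is $\mathrm{diag}\bigl(G'(1,u_0),-P_n(1,u_0)\bigr)$ with $P_n(1,u_0)\ne0$ by coprimality. If $u_0$ is simple the point is hyperbolic, a node or a saddle according to the sign of $P_n(1,u_0)G'(1,u_0)$. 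If $u_0$ has multiplicity $m\ge2$ then $G'(1,u_0)=0$, one eigenvalue vanishes, and since $v=0$ is invariant the reduced flow is $\dot u\approx \frac{G^{(m)}(1,u_0)}{m!}(u-u_0)^m$; the semi-hyperbolic classification (the same \cite[Theorem 7.1, Chapter 2]{ZDHD} used in Proposition \ref{th-BU}) gives a saddle--node precisely when $m$ is even and a node or saddle when $m$ is odd. The vertical direction is treated identically in the companion chart, which yields the stated dichotomy.

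For the third statement I would use that homogeneity makes the phase portrait scale-invariant, so the origin is a global center iff one, hence every, orbit of \eqref{H/G} is closed; when $\widetilde G(\theta)\ne0$ for all $\theta$ the orbit is the graph $r(\theta)$ and closes iff $\int_0^{2\pi}\frac{\widetilde H(\theta)}{\widetilde G(\theta)}\,d\theta=0$. Substituting $u=\tan\theta$ and exploiting the degree-$(n+1)$ homogeneity of $H$ and $G$ to fold the two half-planes together, this becomes $2\int_{-\infty}^{\infty}\frac{H(1,u)}{(1+u^2)G(1,u)}\,du$. The key identity is $H(1,u)-(1+u^2)P_n(1,u)=u\,G(1,u)$, which yields
\[
\frac{H(1,u)}{(1+u^2)G(1,u)}=\frac{P_n(1,u)}{G(1,u)}+\frac{u}{1+u^2};
\]
since $\int_{-R}^{R}\frac{u}{1+u^2}\,du=0$ for every $R$, the center condition reduces to $\int_{-\infty}^{\infty}\frac{P_n(1,u)}{G(1,u)}\,du=0$. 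I would also note that convergence of this (symmetric) integral already forces $G(1,u)$ to have no real zeros and the coefficient $c_{0n}\ne0$, i.e.\ the absence of the finite and vertical characteristic directions governed by Proposition \ref{th-y}, so the two halves of the equivalence fit together.

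The main obstacle I expect is the careful bookkeeping in the third part: one must justify that the improper integral is read as a symmetric principal value (the integrand decays only like $-1/u$ at infinity), rule out finite and vertical characteristic directions so that no orbit enters or leaves the origin along a fixed ray, and invoke the scaling symmetry to upgrade ``one closed orbit'' to ``global center.'' The compactification computation in the second part is routine once the chart formulas are in hand, and the first part is immediate.
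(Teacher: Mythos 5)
The paper never proves this lemma: it is quoted from Cima--Llibre \cite{Cima1990}, so there is no internal argument to compare against and your proposal must be judged on its own. Your first two items are correct: invariance of the zero set of a linear factor of $G$ is exactly the tangency computation you give, and the classification at infinity follows from the chart system $\dot u=G(1,u)$, $\dot z=-zP_n(1,u)$ (the same computation as \eqref{sys-Ix}--\eqref{sys-Iy}) together with the semi-hyperbolic theorem of \cite[Theorem 7.1, Chapter 2]{ZDHD}, i.e.\ the toolkit of Proposition \ref{th-BU} transplanted to $z=0$. In the third item, the identity $H(1,u)=(1+u^2)P_n(1,u)+u\,G(1,u)$ and the reduction of $\int_0^{2\pi}\widetilde H(\theta)/\widetilde G(\theta)\,d\theta$ to the symmetric principal value of $\int P_n(1,u)/G(1,u)\,du$ are also correct, and you are right that the principal-value reading is forced (even in the center case the integrand decays like $-1/u$). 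This settles the forward implication: center $\Rightarrow$ integral zero.

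The gap is in the backward implication, and it sits exactly where you flagged the ``main obstacle'': your parenthetical claim that convergence of the symmetric integral \emph{already forces} $c_{0n}\ne0$ is false. Take $n=2$, $P_2=xy$, $Q_2=x^2+2y^2$ (coprime). Then $G(x,y)=x(x^2+y^2)$, so $G(1,u)=1+u^2$ has no real zeros, the integrand $P_2(1,u)/G(1,u)=u/(1+u^2)$ is odd, and the symmetric principal value equals $0$; nevertheless $c_{02}=0$, the $y$--axis is an invariant line (the zero set of the linear factor $x$ of $G$, i.e.\ your own first item), orbits on it tend to or recede from the origin, and the origin is not a center. So ``integral exists symmetrically and vanishes'' does not imply monodromy, and the ``if'' direction collapses. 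The missing ingredient is the hypothesis present in Cima--Llibre's original formulation but dropped in the paper's transcription: the binary form $G(x,y)$ must have \emph{no real linear factors} --- a condition that simultaneously rules out real zeros of $G(1,u)$ and the factor $x$ (equivalently $c_{0n}\ne 0$), and which cannot be recovered from convergence of the integral. With that hypothesis added (or with the statement weakened to: under monodromy, center $\Leftrightarrow$ integral zero), your argument goes through verbatim; as literally stated --- by you and by the paper --- the equivalence fails on the example above.
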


From the  expression of $G(x, y)$ in \eqref{Gxy}, we know that the origin of a homogeneous polynomial system cannot be a center if the degree of the system
is even. Actually, in this case there exists a  $u_0$ satisfying $G(1, u_0)=0$ and the system has an invariant line $y=u_0 x$.

\begin{proposition}
 A homogeneous polynomial system  has  a center at the origin only  if the degree of the system is odd.
\end{proposition}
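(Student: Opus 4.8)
The plan is to show that whenever the degree $n$ is even the origin of system \eqref{homo-n} possesses an invariant straight line through it, which is incompatible with being a center. The natural tool is the auxiliary polynomial $G(x,y)=xQ_n(x,y)-yP_n(x,y)$ introduced in \eqref{Gxy}. Since $P_n$ and $Q_n$ are homogeneous of degree $n$, both $xQ_n$ and $yP_n$ are homogeneous of degree $n+1$, so $G$ is itself a homogeneous polynomial of degree $n+1$.

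First I would rule out the degenerate possibility $G\equiv 0$ for $n>1$. If $G\equiv 0$, then $xQ_n=yP_n$; since $\gcd(x,y)=1$ this forces $x\mid P_n$ and $y\mid Q_n$, and writing $P_n=x\widetilde P$, $Q_n=y\widetilde P$ one sees that $\widetilde P$ divides $\gcd(P_n,Q_n)$. Because $P_n$ and $Q_n$ are coprime, $\widetilde P$ must be constant, whence $n=1$. Thus for even $n$ (so $n\ge 2$) we have $G\not\equiv 0$.

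Next, since $n$ is even, $n+1$ is odd. A nonzero real homogeneous polynomial of odd degree in two variables must contain a real linear factor: factoring $G$ over $\R$ into real linear and real irreducible quadratic factors, the number of linear factors counted with multiplicity has the same parity as $\deg G=n+1$, which is odd, so at least one real linear factor $\ell(x,y)=\alpha x+\beta y$ exists. By the first statement of Lemma \ref{lm-inf}, the line $\ell(x,y)=0$ is then invariant under the flow of \eqref{homo-n}.

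Finally, this invariant line passes through the origin and consists of orbits on each side of it. A center at the origin would require every orbit in a punctured neighborhood to be a closed curve encircling the origin, whereas an orbit lying on a straight line through the origin can never close up around it, a contradiction. Hence the origin cannot be a center when $n$ is even, which is precisely the assertion that a center forces $n$ to be odd. The only mildly delicate point is guaranteeing a real linear factor even when the coefficient of $y^n$ in $P_n$ vanishes, so that $G(1,u)$ drops degree in $u$; the parity-of-factorization argument above handles this case uniformly.
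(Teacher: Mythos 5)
Your proof is correct and follows essentially the same route as the paper's own argument: for even degree $n$ the polynomial $G(x,y)=xQ_n(x,y)-yP_n(x,y)$ is homogeneous of odd degree $n+1$, hence admits a real linear factor whose zero set is an invariant line through the origin (first statement of Lemma \ref{lm-inf}), which is incompatible with a center. Your write-up is in fact slightly more careful than the paper's two-line justification, since you rule out $G\equiv 0$ via coprimality and handle the case where $G(1,u)$ drops degree by factoring the homogeneous polynomial $G$ itself rather than its dehomogenization.
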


\bigskip


\section{Global structure of quasi--homogeneous but non--homogeneous quintic  systems}

From Theorem \ref{th-homo-1}, there exist four homogeneous polynomial differential systems, which are translated from quintic quasi--homogeneous but non--homogeneous systems. Here we only study the global topological structures of quintic quasi--homogeneous differential systems, which are transformed into the homogeneous differential systems $\mathcal H_3$ and $\mathcal H_2$. We will not study the ones which are transformed into $\mathcal H_1$ and $\mathcal H_0$, because they are either linear or constant systems, whose structures are simple and so are omitted.

Note that separatrices of a quasi--homogeneous system are usually not easy to be determined directly. We will need the help of the ones of the related homogeneous differential systems.
After obtaining the phase portraits of the homogeneous systems $\mathcal H_3$ and $\mathcal H_2$,
we can get the phase portraits of the original quintic quasi--homogeneous systems through symmetry of the system
with respect to the axes and the change of coordinates \eqref{change2}.

First we study the homogeneous system $\mathcal{H}_3$, which corresponds to the quasi--homogeneous system $X_{111}$. Based on the classification of  fourth--order binary forms,
Cima and Llibre \cite{Cima1990} obtained the algebraic characteristics of cubic homogeneous  systems and further they
researched  all phase portraits of such canonical cubic homogeneous  systems. However, it is not easy to change a cubic homogeneous  system
to its canonical form since one needs to solve four quartic polynomial equations. We will apply Propositions \ref{th-BU} and \ref{th-y} and Lemma \ref{lm-inf} to obtain the global dynamics of  system $\mathcal{H}_3$ and consequently those of quasi--homogeneous system $X_{111}$.


\begin{theorem}
\label{th-X111}
The quintic quasi--homogeneous system $X_{111}$ has 52
topologically equivalent   global phase--portraits without taking into account the direction of the time.
\end{theorem}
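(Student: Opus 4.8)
The plan is to analyze the cubic homogeneous system $\mathcal{H}_3$ exhaustively and then pull back the phase portraits to $X_{111}$ via Theorem \ref{th-homo-1} and the symmetry of Theorem \ref{th-invariant}. First I would observe that $\mathcal{H}_3$ has the factored form $\dot x = x(c_{12}y^2+c_{21}xy+c_{30}x^2)$, $\dot y = y(d_{03}y^2+d_{12}xy+d_{21}x^2)$ with $c_{30}d_{03}\neq 0$, so both coordinate axes $x=0$ and $y=0$ are invariant lines. I would compute $G(x,y)=xQ_3-yP_3$ explicitly; because of the factored structure $G$ is divisible by $xy$, so $u=0$ (the direction $y=0$) and $v=0$ (the direction $x=0$, i.e. $\theta=\pi/2$) are automatically characteristic directions. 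The remaining characteristic directions come from the quadratic factor of $G(1,u)/u$, giving at most two further finite directions plus the behavior at infinity.

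Next I would enumerate cases according to the number and multiplicity of real roots of $G(1,u)=0$ and of $G(v,1)=0$, since these govern the singularities on the $u$-axis of the blown-up system \eqref{sys-xu} and the tangency behavior along $x=0$. For each root I would apply Proposition \ref{th-BU} to classify the corresponding finite singularity of the blow-up as a saddle, node, or saddle-node, using the signs of $P_n(1,u_0)G'(1,u_0)$ (or the higher derivative $G^{(m)}$ when the root is degenerate). Simultaneously I would apply Proposition \ref{th-y} at $\theta=\pi/2$ to count the orbits tangent to the $y$-axis, and Lemma \ref{lm-inf} to classify the infinite singularities and to test, via the integral $\int_{-\infty}^{\infty}P_3(1,u)/G(1,u)\,du$, whether the origin is a global center. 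Assembling the local data around the origin into sectors (elliptic, parabolic, hyperbolic) via the normal-sector results cited in Proposition \ref{th-y}, I would reconstruct each global phase portrait of $\mathcal{H}_3$, grouping them into topological equivalence classes.

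Finally I would transfer each portrait of $\mathcal{H}_3$ back to $X_{111}$. Since $X_{111}$ is transformed to $\widetilde X_{111}$ only on the half-plane $y>0$ by the change \eqref{change2} $\tilde x=x^{s_2}=x,\ \tilde y=y^{s_1}=y^2$, I would first read off the dynamics of $X_{111}$ in $y>0$ as the homeomorphic image of the $\mathcal{H}_3$ portrait restricted appropriately, then use the invariance of $X_{111}$ under $(x,y)\mapsto(-x,-y)$ (the case $s_1,s_2$ both yielding the relevant parity in Theorem \ref{th-invariant}) to reflect the $y>0$ picture into $y<0$ and obtain the full portrait on $\R^2$. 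Carrying this out for every admissible sign configuration of the coefficients $a_{14},a_{22},a_{30},b_{05},b_{13},b_{21}$ and counting the resulting distinct topological types yields the number $52$.

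The main obstacle I anticipate is the bookkeeping of the case analysis and the verification that no two superficially different coefficient configurations produce topologically equivalent portraits, and conversely that genuinely distinct portraits are not accidentally identified: one must carefully track the interplay between the finite characteristic directions, the infinite singularities, and the presence or absence of a global center, while also checking that the half-plane gluing under the non-smooth change \eqref{change2} preserves the topological type. Establishing the exact count of $52$ (rather than an upper bound) therefore requires a complete and non-redundant enumeration, which is the delicate part of the argument.
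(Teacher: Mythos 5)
Your overall skeleton coincides with the paper's: blow up the origin of $\mathcal{H}_3$, classify the singularities on the exceptional line via Proposition \ref{th-BU}, handle the vertical direction via Proposition \ref{th-y}, use Lemma \ref{lm-inf} at infinity, and transfer the picture to $X_{111}$ on a half plane and then reflect. However, there are two genuine gaps, and the second one is fatal for the count of $52$. First, the symmetry you invoke is wrong: $X_{111}$ has minimal weight vector $(2,1,5)$, so $s_1=2$ is even and $s_2=1$ is odd, and Theorem \ref{th-invariant} gives invariance under $(x,y)\to(x,-y)$, not under $(x,y)\to(-x,-y)$. Indeed, under $(x,y)\to(-x,-y)$ the monomials $a_{22}x^2y^2$ and $b_{13}xy^3$ change sign, so the system is generally not invariant; gluing the lower half plane by a point reflection would produce incorrect portraits.

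Second, and more seriously, you treat the global portrait of $X_{111}$ as the pull--back of the global portrait of $\mathcal{H}_3$, but the change \eqref{change2} together with the accompanying time rescaling does not conjugate the Poincar\'e compactifications: the behavior of $X_{111}$ at infinity must be computed on $X_{111}$ itself. The paper does exactly this (its equations \eqref{X111-Ix} and \eqref{X111-Iy}) and finds that the singularity $I_1$ at the end of the $y$--axis is a saddle when $a_{14}>1$, a stable node when $a_{14}<1$, and that the whole circle at infinity consists of singularities when $a_{14}=1$. Note the mismatch of thresholds: for $\mathcal{H}_3$ the transition at the end of the $y$--axis occurs at $c_{12}=1$, whereas for $X_{111}$ it occurs at $a_{14}=1$, i.e.\ $c_{12}=1/2$; so portraits of $X_{111}$ are not in bijection with portraits of $\mathcal{H}_3$, and "superficially different configurations'' cannot be resolved at the level of $\mathcal{H}_3$ alone. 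This trichotomy in $a_{14}$ is precisely the top--level case division that produces the count $52 = 24 + 14 + 14$ (Tables 1 and 2 of the paper, plus the $14$ portraits in the degenerate case $a_{14}=1$, where the line at infinity is replaced by a continuum of singularities). Your enumeration scheme, which never examines the infinity of the quintic system directly, would miss the $a_{14}=1$ family entirely and misclassify the portraits with $1/2<c_{12}<1$, so it cannot arrive at the exact number $52$.
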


\begin{proof}
Taking respectively the Poincar\'e transformations
$x=1/z, \, y=u/z$ and $x=v/z, \, y=1/z$  together with the time rescaling $d\tau_1=b_{05}dt/z^2$,
system  $X_{111}$  around the equator of the Poincar\'e sphere can be written respectively  in
\begin{eqnarray}
\begin{cases}
\dot{u}=u ( (b_{21}-a_{30})z^2 +(b_{13}-a_{22})u^2z + (1-a_{14})u^4 ),
\\
\dot{z}=-z(a_{30}z^2+a_{22}u^2z+a_{14}u^4).
\end{cases}
\label{X111-Ix}
\end{eqnarray}
and
\begin{eqnarray}
\begin{cases}
\dot{v}=v ( (a_{14}-1) +(a_{22}-b_{13})vz +(a_{30}-b_{21})v^2z^2 ),
\\
\dot{z}=-z(1+b_{13}vz+b_{21}v^2z^2),
\end{cases}
\label{X111-Iy}
\end{eqnarray}
where we still keep the notations of parameters $a_{ij}$ and $b_{ij}$ for simplicity.
Therefore, there exist only singularities $I_0$ and $I_1$ of system $X_{111}$ located at the infinity of the $x$-axis and
the $y$-axis respectively if $a_{14}\ne 1$. It is not hard to  see that $I_1$ is either a saddle if $a_{14}>1$,
or a stable node  if $a_{14}<1$, and $I_0$ is always  degenerate. When $a_{14}=1$, the infinity fulfils singularities.

Notice from Theorem \ref{th-invariant} that the corresponding quintic quasi--homogeneous system $X_{111}$ associated to $\mathcal{H}_3$ is invariant
under the action $(x, y)\to (x,-y)$ and the vector field associated to system $X_{111}$  is symmetric with respect to the $x$--axis.
We only need to study the dynamics of system $X_{111}$ in the half plane $y>0$, where the change \eqref{change2} is invertible.

We now investigate the global dynamics of system $\mathcal{H}_3$, and then obtain the ones of  system $X_{111}$.
After the time rescaling $dt=dt_1/d_{03}$, the cubic homogeneous system $\mathcal{H}_3$ in Theorem \ref{th-homo-1}
becomes
\begin{equation}
\label{H3PQ}
\begin{cases}
 \dot{x}=x(c_{12}y^2 +c_{21}xy +c_{30}x^2):=P_3(x, y),
 \\
\dot{y}=y(y^2+d_{12}xy+d_{21}x^2):=Q_3(x, y),
  \end{cases}
\end{equation}
where $c_{30}\ne 0$ and we keep the notations for the parameters $c_{ij}, d_{i j}$ for simplicity.
The change of variables \eqref{blow-up} transforms the cubic homogeneous system \eqref{H3PQ} into
\begin{equation}
\begin{cases}
  \dot{x}=x \widehat P_3(u):=x ( c_{12}u^2 +c_{21}u +c_{30} ),
  \\
   \dot{u}=\widehat G_3(u):=u( (1-c_{12})u^2 +(d_{12}-c_{21})u +d_{21}-c_{30}),
\end{cases}
 \label{sys-xu1}
 \end{equation}
where
\[
\widehat {P}_3(u)=P_3(1,u), ~~ \widehat {G}_3(u)=G_3(1,u), ~~G_3(x,y)= xQ_3(x,y)-y P_3(x,y).
\]
Suppose that $u_0$ is a zero of $\widehat {G}_3(u)$. Notice that the multiplicity of $u_0$ is at most 3 and $\widehat {G}_3(u)$ has at most three different zeros.

By \eqref{H/G} and \eqref{H=P}, we get from $H_3(x, y) = yQ_3(x,y)+xP_3(x,y)$ that
\begin{eqnarray}
\widehat {H}_3(u_0):= H_3(1,u_0)=(1+u_0^2) \widehat {P}_3(u_0)\ne 0,
 \label{H3}
\end{eqnarray}
which implies that the sign of  $\widehat {P}_3(u_0)$ determines the direction of the orbits along the line $y=u_0 x$.
If  $\widehat {P}_3(u_0)>0$ (resp. $<0$), the orbits will  leave from (resp. approach) the origin  along the characteristic direction $\theta=\arctan (u_0)$ in the positive time.

By Proposition \ref{th-BU} and Lemma \ref{lm-inf},
the singularity $E=(0,u_0)$ of system \eqref{sys-xu1} is  a saddle if either $\widehat {P}_3(u_0) \widehat {G}'_3(u_0)<0$,
or $\widehat {G}'_3(u_0)=\widehat {G}''_3(u_0)=0$ and $\widehat {P}_3(u_0) \widehat {G}^{'''}_3(u_0)<0$.
And $E=(0,u_0)$ is a node  if either $\widehat {P}_3( u_0) \widehat {G}'_3(u_0)>0$,
or $\widehat {G}'_3(u_0)=\widehat G''_3(u_0)=0$ and $\widehat {P}_3(u_0) \widehat {G}^{'''}_3(u_0)>0$.
These show that  except the invariant line $y=u_0 x$  system $\mathcal{H}_3$  has either no orbits or infinitely many orbits
connecting the origin along the characteristic directions $\theta=\arctan (u_0)$.

In contrast, if  $\widehat {G}'_3(u_0)=0$ and $\widehat {G}''_3(u_0)\ne 0$, we get from Proposition \ref{th-BU} that the singularity
$E=(0,u_0)$ of system \eqref{sys-xu1} is a saddle--node.
More precisely, there exist infinitely many orbits  of system $\mathcal{H}_3$  connecting the origin
along the direction of the invariant line $y=u_0 x$ if $u_0$ is a zero of multiplicity 2 of  $\widehat {G}_3(u)$.


Notice that $\theta=\frac{\pi}{2}$ is a zero of   $G_3(\cos\theta, \sin\theta)$  with multiplicity  $m\le 3$.
From Proposition \ref{th-y},
there exist infinitely many orbits (resp. exactly one orbit) connecting the origin of system \eqref{H3PQ} and being tangent to the $y$--axis at the origin either if $m=2$
or  $m$ is odd and
$
\widetilde{G}^{(m)}(\frac{\pi}{2})\widetilde{H}(\frac{\pi}{2})> 0
$
(resp.   if $m$ is odd and
$
\widetilde{G}^{(m)}(\frac{\pi}{2})\widetilde{H}(\frac{\pi}{2})<0).
$


From Lemma \ref{lm-inf},
each singularity of system  \eqref{H3PQ}  at infinity is either a node, or a saddle or a saddle--node.
In order to get the concrete parameter conditions determining the dynamics of system  \eqref{H3PQ} at infinity,
we need the Poincar\'e compactification \cite{Dumor2006}.

Taking respectively the Poincar\'e transformations
$x=1/z, \, y=u/z$ and $x=v/z, \, y=1/z$  together with the time rescaling $d\tau=dt_1/z^2$,
system  \eqref{H3PQ} around the equator of the Poincar\'e sphere can be written respectively in
\begin{eqnarray}
\dot{u}= G_3(1,u), ~~~ \dot{z} = -zP_3(1, u),
  \label{sys-Ix}
\end{eqnarray}
and
\begin{eqnarray}
\dot{v}= -G_3(v,1), ~~~ \dot{z} = -zQ_3(v,1).
\label{sys-Iy}
\end{eqnarray}
Therefore, a singularity $I_{u_0}$ of system $\mathcal{H}_3$ located at the infinity of the line $y=xu_0 $
is either a saddle if $\widehat {P}_3( u_0) \widehat {G}'_3(u_0)>0$,  or $\widehat {G}'_3(u_0)=\widehat {G}''_3(u_0)=0$
and $\widehat {P}_3(u_0) \widehat {G}^{(3)}_3(u_0)>0$;
or a node  if $\widehat {P}_3( u_0) \widehat {G}'_3(u_0)<0$, or $\widehat {G}'_3(u_0)=\widehat {G}''_3(u_0)=0$ and $\widehat {P}_3(u_0) \widehat {G}^{(3)}_3(u_0)<0$.
If  $\widehat {G}'_3(u_0)=0$ and $\widehat {G}''_3(u_0)\ne 0$, the singularity $I_{u_0}$  at infinity  is a saddle--node.

System  \eqref{H3PQ} has a singularity  $I_y$ located at the end of the
$y$--axis. It is a saddle if $c_{12}>1$,
or $c_{12}=1$, $d_{12}=c_{21}$ and $d_{21}<c_{30}$.
The singularity $I_y$ is a stable node if $c_{12}<1$,
or $c_{12}=1$, $d_{12}=c_{21}$ and $d_{21}>c_{30}$. And it is a saddle--node if  $c_{12}=1$ and $d_{12}\ne c_{21}$.

Summarizing the above analysis, one needs to distinguish three cases:
\begin{itemize}
\item[$(i)$]  $\widehat {G}_3(u)$ has three different real zeros $0$ and $u_{\pm} :=\dfrac{c_{21}-d_{12}\pm \sqrt{\Delta}}{2(1-c_{12})}$ if
$
\Delta>0, ~~ d_{21}\ne c_{30}, $ and $c_{12} \ne 1,
$
where $\Delta:=(d_{12}-c_{21})^2-4(1-c_{12})(d_{21}-c_{30})$.
\item[$(ii)$] $\widehat {G}_3(u)$ has two different real zeros $0$ and $u_1$,
where
\begin{equation*}
u_1=
\begin{cases}
  u_{11},  &~~\mbox{if}~ d_{21}= c_{30}, ~(d_{12}-c_{21})(c_{12}-1)\ne0,
  \\
  u_{12}, &~~\mbox{if}~ c_{12}=1, ~(c_{30}-d_{21})(d_{12}-c_{21})\ne0,
  \\
  u_{13}, &~~\mbox{if}~ \Delta=0, c_{21}\ne d_{12},
\end{cases}
  \end{equation*}
with $ u_{11}=\frac{d_{12}-c_{21}}{c_{12}-1}$, $u_{12}=\frac{c_{30}-d_{21}}{d_{12}-c_{21}}$ and $u_{13}=\frac{c_{21}-d_{12} }{2(1-c_{12})}$.
\item[$(iii)$] $\widehat {G}_3(u)$ has only one real zero  $0$ if either ($\mathcal{C}_{31}$): $\Delta<0$  and $(c_{12}- 1)(d_{21}-c_{30}) \ne 0$,
or ($\mathcal{C}_{32}$): $c_{12}= 1$ and $d_{12}=c_{21}$,
 or ($\mathcal{C}_{33}$): $d_{21}= c_{30}$ and $(c_{12}- 1)(d_{12}-c_{21}) = 0$ are satisfied.
 \end{itemize}
Notice that $P_3$ and $Q_3$ share a non--constant common factor if $d_{21}=c_{30}$, $c_{12}= 1$ and $d_{12}=c_{21}$, which is not under the consideration.

Going back to the original system  $X_{111}$,
 the invariant line $y=u_0x$ of system $\mathcal H_3$  as $u_0\ne 0$ is an invariant curve of system  $X_{111}$,
which is tangent to the $y$-axis at the origin   and  connects the origin and the singularity $I_0$  at infinity.
Moreover, the invariant curve of system  $X_{111}$ is usually a separatrix of hyperbolic sectors, parabolic sectors or elliptic sectors, which is drawn in
thick blue curves in
Fig. \ref{Fig_global-2}.
The above analyses provide enough preparation for studying global topological phase portraits of the quintic quasi-homogeneous system $X_{111}$.
By the properties of the singularities $I_1$ at infinity, we discuss three cases: $a_{14}>1$, $a_{14}<1$ and $a_{14}=1$.

In the case $a_{14}<1$, we have that $c_{12}<1/2<1$  and $I_1$ is  a stable  node.
We discuss  three subcases $(i)$--$(iii)$ under the condition  $c_{12}<1$.
In subcase $(i)$, we obtain the $6$ global phase portraits shown in Fig. \ref{Fig_global-2} ($\mathcal{I}$)-- ($\mathcal{V}$),
in which we let $u_+>0>u_-$ for the corresponding cubic homogeneous system $\mathcal H_3$.
In subcase $(ii)$, we get the $6$ global phase portraits for the system  $X_{111}$ given in Fig.
\ref{Fig_global-2}  ($\mathcal{VI}$)--($\mathcal{VIII}$.3), where we choose $u_1>0$
for the corresponding cubic homogeneous system $\mathcal H_3$.
In subcase $(iii)$, we obtain the $2$ global  phase portraits for the system  $X_{111}$
given in Fig. \ref{Fig_global-2} ($\mathcal{IX}$)--($\mathcal{X}$).


\begin{figure}[htp]
\begin{center}
\begin{tabular}{cc}
\includegraphics[width=4in]{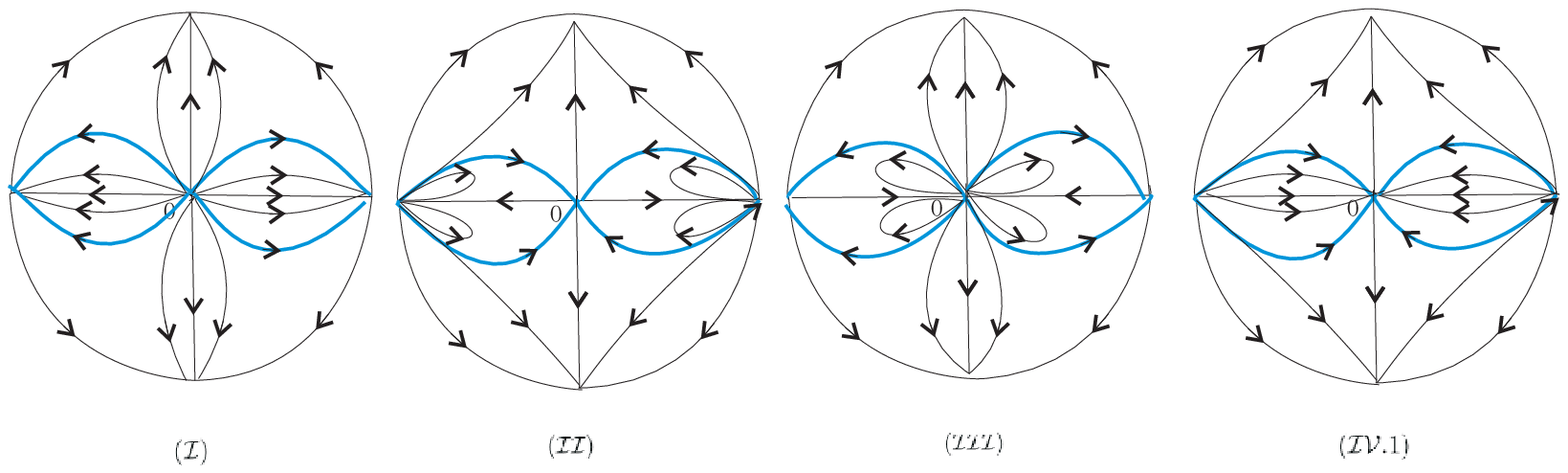}
\\
\includegraphics[width=4in]{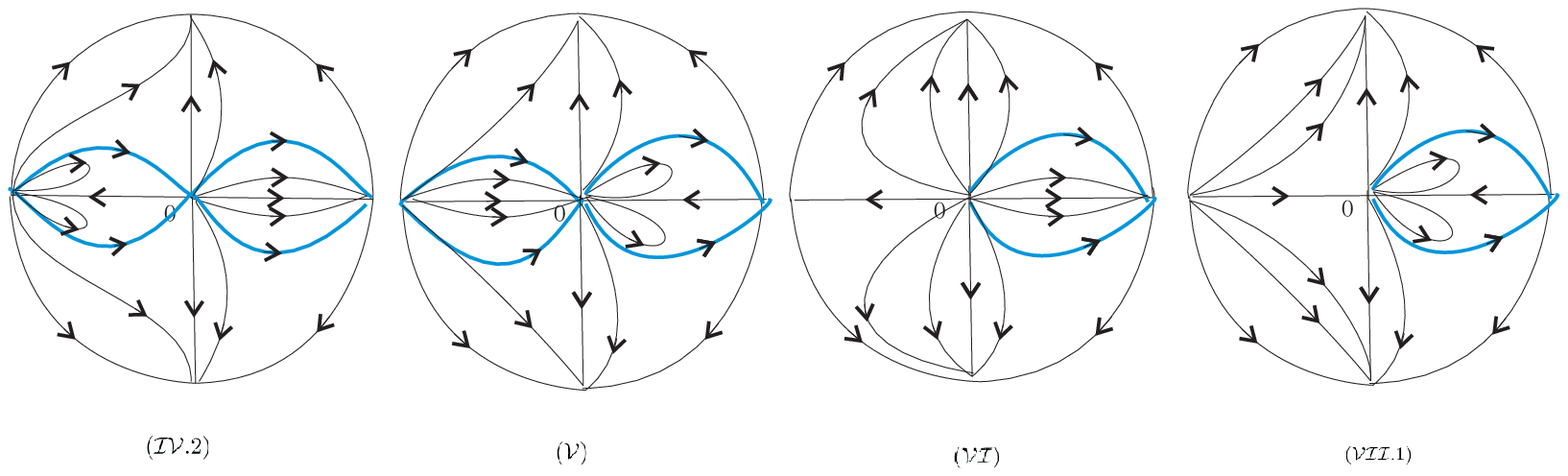}
\end{tabular}
\end{center}


\begin{center}
\begin{tabular}{cc}
\includegraphics[width=4in]{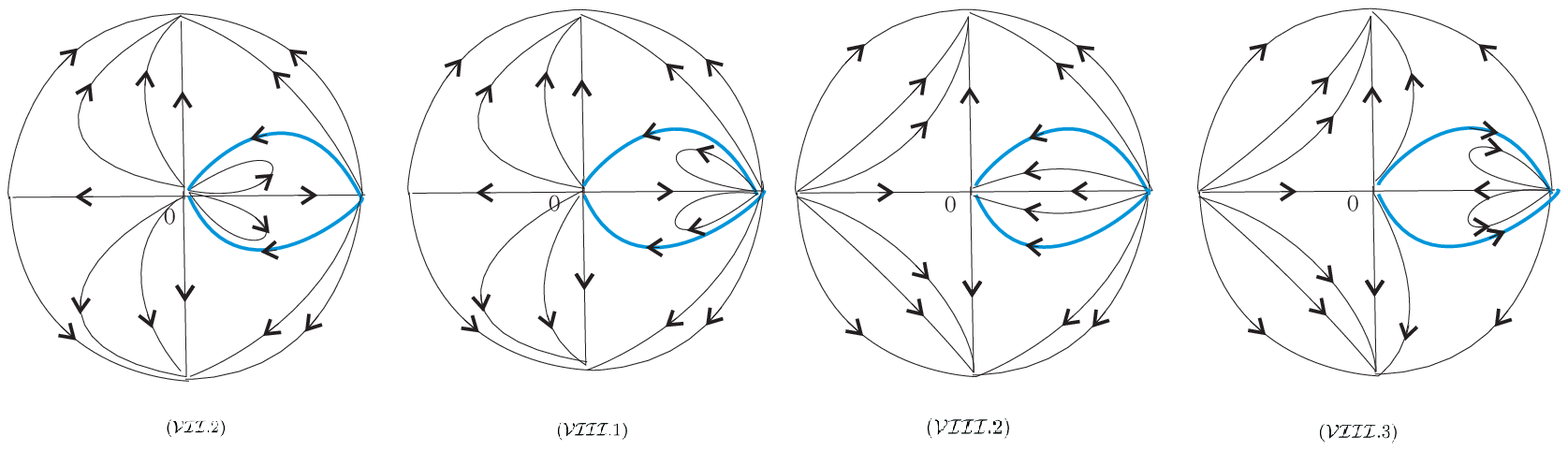}
\\
\includegraphics[width=2in]{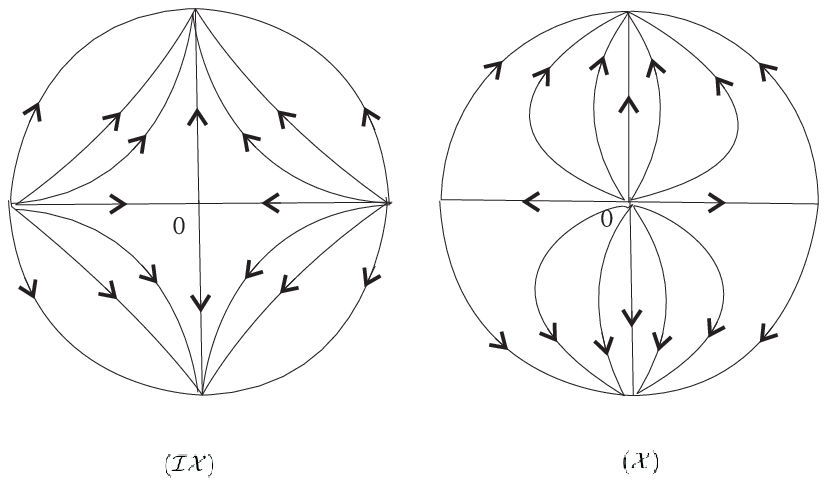}
\end{tabular}
\end{center}
\caption{Global phase portraits of system $X_{111}$ as $a_{14}<1$. }
\label{Fig_global-2}
\end{figure}

In the case $a_{14}>1$,  equilibrium $I_1$ is a saddle
and we obtain the $24$ global  phase portraits for the system  $X_{111}$ by a similar research as the case $a_{14}<1$.

%
%
%
%
%
%
%
%
%
%
%
%

Tables 1 and 2 show the parameter conditions under which system $X_{111}$
has the phase portraits, 
 where we have used the relation between the parameters of $X_{111}$ and $\mathcal H_3$, i.e. $c_{12}=a_{14}/(2b_{05})$,
$c_{21}=a_{22}/(2b_{05})$,  $c_{30}=a_{30}/(2b_{05})$, $d_{12}=b_{13}/b_{05}$, $d_{21}=b_{21}/b_{05}$.
Note that each upper half part of the  figure of  system $\mathcal H_3$ 
is equivalent to some half part of the one in Figure 5.1 (1)--(14) of Cima and Llibre  \cite{Cima1990}, which is also pointed out in Table 1 and Table 2.


{\footnotesize
\begin{tabular}{|c|c|c|}
\hline
Figure & parameter conditions & Figure 5.1 of \cite{Cima1990}
\\
\hline
Figure (I) & $\widehat{P}_3(u_{\pm})>0$,  $\widehat{G}'_3(u_+)\widehat{G}'_3(u_-)>0$, $\widehat{P}_3(0)>0$ & Figure (4)
\\
\hline
Figure (I)  & \mbox{or} $u_1=u_{11}$, $\widehat{P}_3(u_1)>0$,  $\widehat{G}'_3(u_1)\ne 0$, $\widehat{P}_3(0)>0$, &
\\
 & \mbox{or} $u_1=u_{12}$, $\widehat{P}_3(u_1)>0$,  $\widehat{G}'_3(u_1)\ne 0$, $\widehat{P}_3(0)>0$, & Figure (8)
\\
& \mbox{or} $u_1=u_{13}$, $\widehat{P}_3(u_1)>0$,  $\widehat{G}''_3(u_1)\ne 0$, $\widehat{P}_3(0)>0$ &
\\
\hline
Figure (I)& \mbox{Condition} ($\mathcal{C}_{31}$) \mbox{or} ($\mathcal{C}_{32}$),  $\widehat{G}'_3(0)\ne0$, $\widehat{P}_3(0)>0$, &Figure (11)
\\
& \mbox{or} \mbox{Condition} ($\mathcal{C}_{33}$),  $\widehat{G}'''_3(0)\ne 0$, $\widehat{P}_3(0)>0$ &
\\
\hline
Figure (I)& \mbox{Condition} ($\mathcal{C}_{33}$),  $\widehat{G}''_3(0)\ne 0$, $\widehat{P}_3(0)>0$ & Figure (13)
 \\
 \hline
Figure (II) & $\widehat{P}_3(u_{\pm})<0$,  $\widehat{G}'_3(u_{\pm})>0$, $\widehat{P}_3(0)>0$ & Figure (1)
\\
\hline
Figure (III.1) & $\widehat{P}_3(u_{\pm})>0$,  $\widehat{G}'_3(u_{\pm})>0$, $\widehat{P}_3(0)<0$ & Figure (5)
\\
\hline
Figure (III.2) & $\widehat{P}_3(u_+)\widehat{P}_3(u_-)<0$,  $\widehat{G}'_3(u_{\pm})<0$, $\widehat{P}_3(0)>0$ & Figure (5)
\\
\hline
Figure (III.2)& \mbox{or}  $u_1=u_{11}$, $\widehat{P}_3(u_1)<0$,  $\widehat{G}'_3(u_1)<0$, $\widehat{P}_3(0)>0$,&
\\
& \mbox{or} $u_1=u_{12}$, $\widehat{P}_3(u_1)<0$,  $\widehat{G}'_3(u_1)<0$, $\widehat{P}_3(0)>0$, & Figure (9)
\\
\hline
Figure (III.3) &$\widehat{P}_3(u_{\pm})<0$,  $\widehat{G}'_3(u_{\pm})<0$, $\widehat{P}_3(0)<0$  & Figure (5)
\\
\hline
Figure (IV.1) & $\widehat{P}_3(u_{\pm})>0$,  $\widehat{G}'_3(u_{\pm})<0$, $\widehat{P}_3(0)<0$ & Figure (2)
\\
\hline
Figure (IV.2) & $\widehat{P}_3(u_+)\widehat{P}_3(u_-)<0$,  $\widehat{G}'_3(u_{\pm})>0$, $\widehat{P}_3(0)>0$ & Figure (2)
\\
\hline
Figure (IV.2)& \mbox{or}  $u=u_{11}$, $\widehat{P}_3(u_1)<0$,  $\widehat{G}'_3(u_1)>0$,  $\widehat{P}_3(0)>0$  & Figure (6)
\\
\hline
Figure (IV.3)   &  $\widehat{P}_3(u_{\pm})<0$,  $\widehat{G}'_3(u_{\pm})>0$, $\widehat{P}_3(0)<0$ & Figure (2)
\\
\hline
Figure (V.1) & $\widehat{P}_3(u_+)\widehat{P}_3(u_-)<0$,  $\widehat{G}'_3(u_{\pm})>0$, $\widehat{P}_3(0)<0$ & Figure (3)
\\
\hline
Figure (V.2) & $\widehat{P}_3(u_+)\widehat{P}_3(u_-)<0$,  $\widehat{G}'_3(u_{\pm})<0$, $\widehat{P}_3(0)<0$ & Figure (3)
\\
\hline
Figure (VI.1) & $u_1=u_{11}$, $\widehat{P}_3(u_1)>0$,  $\widehat{G}'_3(u_1)> 0$, $\widehat{P}_3(0)<0$, & Figure (7)
\\
\hline
Figure (VI.2) & $u_1=u_{12}$, $\widehat{P}_3(u_1)<0$,  $\widehat{G}'_3(u_1)< 0$, $\widehat{P}_3(0)<0$ & Figure (7)
\\
\hline
Figure (VI.3) & $u_1=u_{13}$, $\widehat{P}_3(u_1)<0$,  $\widehat{G}''_3(u_1)> 0$, $\widehat{P}_3(0)>0$ & Figure (7)
\\
\hline
Figure (VI.4) & $u_1=u_{13}$, $\widehat{P}_3(u_1)<0$,  $\widehat{G}''_3(u_1)< 0$, $\widehat{P}_3(0)>0$ & Figure (7)
\\
\hline
Figure (VI.5) & $u_1=u_{11}$, $\widehat{P}_3(u_1)>0$,  $\widehat{G}'_3(u_1)< 0$, $\widehat{P}_3(0)<0$, & Figure (7)
\\
\hline
Figure (VI.6) &  $u_1=u_{12}$, $\widehat{P}_3(u_1)<0$,  $\widehat{G}'_3(u_1)> 0$, $\widehat{P}_3(0)<0$ & Figure (7)
\\
\hline
Figure (VII.1) & $u_1=u_{12}$, $\widehat{P}_3(u_1)>0$,  $\widehat{G}'_3(u_1)<0$, $\widehat{P}_3(0)<0$, & Figure (6)
\\
& \mbox{or} $u_1=u_{13}$, $\widehat{P}_3(u_1)>0$,  $\widehat{G}''_3(u_1)>0$, $\widehat{P}_3(0)<0$
\\
\hline
Figure (VII.2) & $u_1=u_{12}$, $\widehat{P}_3(u_1)<0$,  $\widehat{G}'_3(u_1)>0$, $\widehat{P}_3(0)>0$ & Figure (6)
\\
\hline
Figure (VII.3) & $u_1=u_{11}$, $\widehat{P}_3(u_1)<0$,  $\widehat{G}'_3(u_1)>0$,  $\widehat{P}_3(0)<0$,  & Figure (6)
\\
& \mbox{or} $u_1=u_{13}$, $\widehat{P}_3(u_1)<0$,  $\widehat{G}''_3(u_1)> 0$, $\widehat{P}_3(0)<0$ &
\\
\hline
Figure (VIII.1) & $u_1=u_{12}$, $\widehat{P}_3(u_1)>0$,  $\widehat{G}'_3(u_1)>0$, $\widehat{P}_3(0)<0$, & Figure (9)
\\
& \mbox{or} $u_1=u_{13}$, $\widehat{P}_3(u_1)>0$,  $\widehat{G}''_3(u_1)< 0$, $\widehat{P}_3(0)<0$ &
\\
\hline
Figure (VIII.2) & $u_1=u_{11}$, $\widehat{P}_3(u_1)<0$,  $\widehat{G}'_3(u_1)<0$, $\widehat{P}_3(0)<0$,  &Figure (9)
\\
& \mbox{or} $u_1=u_{13}$, $\widehat{P}_3(u_1)<0$,  $\widehat{G}''_3(u_1)< 0$, $\widehat{P}_3(0)<0$ &
\\
\hline
Figure (IX) & \mbox{Condition} ($\mathcal{C}_{31}$), $\widehat{G}'_3(0)>0$, $\widehat{P}_3(0)<0$, &
\\
& \mbox{or} \mbox{Condition} ($\mathcal{C}_{32}$),  $\widehat{G}'_3(0)>0$, $\widehat{P}_3(0)<0$, & Figure (10)
\\
& \mbox{or} \mbox{Condition} ($\mathcal{C}_{33}$),  $\widehat{G}'''_3(0)>0$, $\widehat{P}_3(0)<0$ &
\\
\hline
Figure (X) & \mbox{Condition} ($\mathcal{C}_{31}$) \mbox{or} ($\mathcal{C}_{32}$),  $\widehat{G}'_3(0)<0$, $\widehat{P}_3(0)<0$, & Figure (12)
\\
& \mbox{or} \mbox{Condition} ($\mathcal{C}_{33}$),  $\widehat{G}'''_3(0)< 0$, $\widehat{P}_3(0)<0$ &
\\
\hline
Figure (XI) &  \mbox{Condition} ($\mathcal{C}_{33}$),  $\widehat{G}''_3(0)\ne 0$, $\widehat{P}_3(0)<0$ & Figure (14)
\\
\hline
\end{tabular}
}

 \hskip 1cm Table 1. Parameter conditions of figures (I)-(XI) as $a_{14}>1$.

\medskip


 {\footnotesize
\begin{tabular}{|c|c|c|}
\hline
Figure & parameter conditions & Figure 5.1 of \cite{Cima1990}
\\
\hline
Figure ($\mathcal{I}$) & $\widehat{P}_3(u_{\pm})>0$,  $\widehat{G}'_3(u_{\pm})>0$,  $\widehat{P}_3(0)>0$ & Figure (4)
\\
\hline
Figure ($\mathcal{II}$) & $\widehat{P}_3(u_{\pm})<0$,  $\widehat{G}'_3(u_{\pm})>0$, $\widehat{P}_3(0)>0$ & Figure (1)
\\
\hline
Figure ($\mathcal{III}$) & $\widehat{P}_3(u_{\pm})>0$,  $\widehat{G}'_3(u_{\pm})>0$, $\widehat{P}_3(0)<0$ & Figure (5)
\\
\hline
Figure ($\mathcal{IV}$.1) & $\widehat{P}_3(u_{\pm})<0$,  $\widehat{G}'_3(u_{\pm})>0$, $\widehat{P}_3(0)<0$ & Figure (2)
\\
\hline
Figure ($\mathcal{IV}$.2) & $\widehat{P}_3(u_+)\widehat{P}_3(u_-)<0$,  $\widehat{G}'_3(u_{\pm})>0$, $\widehat{P}_3(0)>0$ & Figure (2)
\\
\hline
Figure ($\mathcal{V}$) & $\widehat{P}_3(u_+)\widehat{P}_3(u_-)<0$,  $\widehat{G}'_3(u_{\pm})>0$, $\widehat{P}_3(0)<0$ & Figure (3)
\\
\hline
Figure ($\mathcal{VI}$)  &  $u_1=u_{11}$, $\widehat{P}_3(u_1)>0$,  $\widehat{G}'_3(u_1)>0$, $\widehat{P}_3(0)>0$, & Figure (8)
\\
& \mbox{or} $u_1=u_{13}$, $\widehat{P}_3(u_1)>0$,  $\widehat{G}''_3(u_1)> 0$, $\widehat{P}_3(0)>0$ &
\\
\hline
Figure ($\mathcal{VII}$.1) & $u_1=u_{11}$, $\widehat{P}_3(u_1)>0$,  $\widehat{G}'_3(u_1)> 0$, $\widehat{P}_3(0)<0$, & Figure (7)
\\
\hline
Figure ($\mathcal{VII}$.2)  & $u_1=u_{13}$, $\widehat{P}_3(u_1)<0$,  $\widehat{G}''_3(u_1)> 0$, $\widehat{P}_3(0)>0$ & Figure (7)
\\
\hline
Figure ($\mathcal{VIII}$.1) & $u_1=u_{11}$, $\widehat{P}_3(u_1)<0$,  $\widehat{G}'_3(u_1)>0$,  $\widehat{P}_3(0)>0$& Figure (6)
\\
\hline
Figure ($\mathcal{VIII}$.2) & $u_1=u_{11}$, $\widehat{P}_3(u_1)<0$,  $\widehat{G}'_3(u_1)>0$,  $\widehat{P}_3(0)<0$, & Figure (6)
\\
& \mbox{or} $u_1=u_{13}$, $\widehat{P}_3(u_1)<0$,  $\widehat{G}''_3(u_1)> 0$, $\widehat{P}_3(0)<0$ &
\\
\hline
Figure ($\mathcal{VIII}$.3) & $u_1=u_{13}$, $\widehat{P}_3(u_1)>0$,  $\widehat{G}''_3(u_1)>0$, $\widehat{P}_3(0)<0$ & Figure (6)
\\
\hline
Figure ($\mathcal{IX}$) & \mbox{Condition} ($\mathcal{C}_{33}$),  $\widehat{G}'''_3(0)> 0$, $\widehat{P}_3(0)<0$ & Figure (10)
\\
&\mbox{or} \mbox{Condition} ($\mathcal{C}_{31}$),  $\widehat{G}'_3(0)>0$, $\widehat{P}_3(0)< 0$  &
\\
\hline
Figure ($\mathcal{X}$)& \mbox{Condition} ($\mathcal{C}_{31}$),  $\widehat{G}'_3(0)>0$, $\widehat{P}_3(0)> 0$, & Figure (11)
\\
& \mbox{or} \mbox{Condition} ($\mathcal{C}_{33}$),  $\widehat{G}'''_3(0)> 0$, $\widehat{P}_3(0)>0$ &
\\
\hline
\end{tabular}
}

 \hskip 1cm Table 2. Parameter conditions of figures ($\mathcal{I}$)-($\mathcal{X}$) as $a_{14}<1$.

\bigskip

 At last, we consider the case $a_{14}=1$, which implies that $c_{12}=1/2 $  and the infinity fulfils singularities.
Besides, by a time rescaling $d\tau_2=z d\tau_1 $, system \eqref{X111-Iy} becomes
\begin{eqnarray}
\begin{cases}
\dot{v}=v (   (a_{22}-b_{13})v  +(a_{30}-b_{21})v^2z  ) ,
\\
\dot{z}=- (1+b_{13}v z +b_{21}v^2z^2 ) .
\end{cases}
\label{X111-Iy2}
\end{eqnarray}
Clearly,  system \eqref{X111-Iy2}  has no singularities   on the axis $z=0$.
 Note that the global structures of system \eqref{X111-Iy2} and  system \eqref{X111-Iy} are  equivalent outside $z=0$. Hence,
except the $y$--axis no orbits connect the singularities at infinity of system $X_{111}$ other than $I_0$.
Similar to the case $a_{14}<1$, we also have exactly the 14 subcases given in Table 2, and consequently we will obtain 14 different topological phase portraits, 
which are different from those in the case $a_{14}>1$ 
only now the infinity fulfils singularities instead of the invariant line at infinity.

Summarizing the above analysis, we get that system $X_{111}$ has totally 52 topological phase portraits
without taking into account direction of the time.
We complete the proof of the theorem.
 \end{proof}

 \medskip


Finally we study topological structures of the homogeneous system $\mathcal{H}_2$, which corresponds to the quasi--homogeneous systems $X_{011}$, $X_{113}$ and  $X_{131}$.

\begin{theorem}
\label{th-H2}
The global phase portraits of the quintic quasi--homogeneous systems $X_{011}$, $X_{113}$ and  $X_{131}$
are topologically equivalent to one of the phase portraits in Figs. \ref{Fig_global-3} and \ref{Fig_global-4}
without taking into account the direction of the time.
\end{theorem}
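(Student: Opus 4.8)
The plan is to follow the same scheme used for $X_{111}$ in Theorem \ref{th-X111}, now with the quadratic homogeneous system $\mathcal H_2$ in place of the cubic $\mathcal H_3$. By Theorem \ref{th-homo-1} the three quintic systems transform, in the half plane $\{y>0\}$ where the change \eqref{change2} is invertible, into $\mathcal H_2$: the system $X_{011}$ yields the generic case $c_{02}d_{20}\ne0$, while $X_{113}$ and $X_{131}$ yield the degenerate case $c_{02}=d_{20}=0,\ c_{20}d_{02}\ne0$. Each of the three has a minimal weight vector with $s_1$ even and $s_2$ odd (namely $(2,1,4)$, $(4,3,13)$ and $(4,1,5)$), so by Theorem \ref{th-invariant} each is invariant under $(x,y)\to(x,-y)$. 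Hence it suffices to determine the phase portraits of $\mathcal H_2$, pull them back by \eqref{change2} on $\{y>0\}$, and then reflect across the $x$-axis to recover the global portrait on $\R^2$.

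First I would treat the generic case $c_{02}d_{20}\ne0$. Here $G_2(x,y)=xQ_2-yP_2$ is a genuine cubic form with $G_2(0,1)=-c_{02}\ne0$, so $\theta=\pi/2$ is not a characteristic direction and every characteristic direction arises from one of the at most three real roots of the cubic $\widehat G_2(u)=G_2(1,u)$. For each root $u_0$ I would apply Proposition \ref{th-BU} (using $\widehat P_2(u_0)=P_2(1,u_0)\ne0$) to classify the blown-up singularity $E=(0,u_0)$ as a saddle, a node or a saddle--node according to the multiplicity of $u_0$ and the signs of $\widehat P_2(u_0)$ and the relevant derivative of $\widehat G_2$, and Lemma \ref{lm-inf} to read off the matching singularity at infinity along $y=u_0x$; the sign of $\widehat P_2(u_0)$ fixes the flow direction along each invariant line, exactly as in \eqref{H3}. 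Organizing by the number and multiplicities of the real roots of the cubic (three simple, one simple plus a double, a triple, or a single simple root) gives a finite list of local-plus-infinity configurations to be assembled into the global portraits of $\mathcal H_2$.

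Next I would treat the degenerate case $c_{02}=d_{20}=0$, in which $\mathcal H_2$ factors as $\dot x=x(c_{11}y+c_{20}x)$, $\dot y=y(d_{02}y+d_{11}x)$, a Lotka--Volterra-type field with both coordinate axes invariant; here $G_2(x,y)=xy[(d_{02}-c_{11})y+(d_{11}-c_{20})x]$, so $\theta=0$ and $\theta=\pi/2$ are characteristic directions together with at most one further line, and the same Propositions \ref{th-BU} and \ref{th-y} and Lemma \ref{lm-inf} yield the comparatively few possible portraits. In both cases I would use Proposition \ref{th-y} at $\theta=\pi/2$ to count the orbits tangent to the $y$-axis, and the last proposition of Section \ref{s3} to exclude a center at the origin, since the degree of $\mathcal H_2$ is even.

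The final and most delicate step is the pull-back: the inverse of \eqref{change2} on $\{y>0\}$ distorts angles and rescales time (by the factors recorded for $\widetilde X_{011},\widetilde X_{113},\widetilde X_{131}$ in Theorem \ref{th-homo-1}), so I must track how each hyperbolic, parabolic or saddle--node sector of $\mathcal H_2$ is transported to the quasi--homogeneous field, how each invariant line $y=u_0x$ becomes a curve $y^{s_1}=u_0x^{s_2}$ tangent to the $y$-axis joining the origin to infinity, and how the infinite singularities correspond. Because this pull-back depends on the individual weight vector, the three systems may produce different diagrams even when they share the same $\mathcal H_2$. Reflecting the resulting half-plane portrait across the $x$-axis by Theorem \ref{th-invariant} closes each portrait, and comparison identifies it with one of the diagrams in Figs. \ref{Fig_global-3} and \ref{Fig_global-4}. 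I expect the bookkeeping of sector types under the non-smooth change \eqref{change2}, together with verifying that distinct parameter regions really do give topologically distinct portraits, to be the main obstacle rather than any single computation.
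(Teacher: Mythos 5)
Your proposal is correct, and at the strategic level it coincides with the paper's proof: classify the quadratic homogeneous system $\mathcal H_2$, pull the portraits back through \eqref{change2} on the half plane $y>0$, and close up using the symmetry of Theorem \ref{th-invariant}. The tactics, however, differ in two ways. Where you split into the generic case $c_{02}d_{20}\ne 0$ (coming from $X_{011}$) and the Lotka--Volterra case $c_{02}=d_{20}=0$ (coming from $X_{113}$, $X_{131}$), the paper treats all three systems at once, observing that the homogeneous systems attached to $X_{113}$ and $X_{131}$ are the sub-systems of the one attached to $X_{011}$ with $a_{05}=b_{20}=0$, so it suffices to study $X_{011}$ with arbitrary $a_{05},b_{20}$. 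And where you re-derive the classification of $\mathcal H_2$ from Propositions \ref{th-BU}, \ref{th-y} and Lemma \ref{lm-inf} (mirroring what the paper does for $\mathcal H_3$ in Theorem \ref{th-X111}), the paper takes a shortcut: since $G_2$ is a cubic form it has a real linear factor, so a linear change moves one characteristic direction onto the $x$-axis (normal form \eqref{H2-2}, following \cite[Chapter 10]{Ye1986}); this normal form is then pulled back through the inverse of \eqref{change2} to the canonical quasi-homogeneous system \eqref{X011-2}, whose infinity is analyzed directly by Poincar\'e compactification, and the finite classification is quoted from \cite[Theorem 10.5]{Ye1986}, organized---exactly as in your plan---by whether $\widehat G_2(u)$ has three, two or one real zeros (yielding $3+2+2=7$ portraits). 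Your route is more self-contained and makes the invariant coordinate axes of $X_{113}$, $X_{131}$ explicit; the paper's route is shorter but leans on Ye's known classification and on the unstated compatibility of the linear normalization with the non-smooth change \eqref{change2}. Two details to watch if you execute your plan: in the generic case $\theta=\pi/2$ is not a characteristic direction, so Proposition \ref{th-y} is genuinely needed only in the degenerate case; and the line at infinity of the quintic system can be filled with singularities (the analogue of the case $a_{14}=1$ for $X_{111}$; in the paper's normal form \eqref{X011-2} this occurs when $\alpha_{22}=0$) even though the infinity of $\mathcal H_2$ itself never degenerates, so this must be tracked as a separate subcase when you transport the singularities at infinity.
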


\begin{proof}

Since the homogeneous systems associated to $X_{113}$ and $X_{131}$ are sub--systems of the homogeneous systems associated to $X_{011}$
with $a_{05}=b_{20}=0$, we only need to study system $X_{011}$ for arbitrary $a_{05}$ and $b_{20}$.


Similar to the discussion of the cubic homogeneous system $\mathcal{H}_3$ in Theorem \ref{th-X111}, we set
\[
\mathcal{H}_2: =(P_2(x, y), Q_2(x, y))  \quad \mbox{and}\quad  G_2(x,y)= xQ_2(x,y)-y P_2(x,y).
\]
Through the change of variables between the quasi-homogeneous differential systems and their associated homogeneous ones we get that the invariant line $y=u_0x$ of system $\mathcal H_2$  as $u_0\ne 0$ corresponds to an invariant curve of system  $X_{011}$,
which is tangent to the $y$--axis at the origin, and connects the origin and the singularity at infinity located at the end of $x$-axis.
Moreover, this invariant curve of system  $X_{011}$ is usually a separatrix of hyperbolic sectors, parabolic sectors or elliptic sectors, which is drawn in thick blue curves in Fig. \ref{Fig_global-3}. 
Note that $G_2(x,y)$ is of degree three.
It follows that $X_{011}$ has at least one   characteristic  direction at the origin. 
From \cite[Chapter 10]{Ye1986}, we can move the  characteristic  direction to the $x$--axis and system $\mathcal{H}_2$ can be written by a linear transformation  as
\begin{eqnarray}
\begin{cases}
\dot{x}_1= \alpha_{11} x_1^2+\alpha_{12} x_1y_1+\alpha_{22} y_1^2,
\\
\dot{y}_1= \beta_{12}x_1y_1+\beta_{22} y_1^2.
\end{cases}
\label{H2-2}
\end{eqnarray}
Since system $\mathcal{H}_2$ has no a common factor, and so is system \eqref{H2-2}. It forces that $\alpha_{11}\ne 0$. Using the inverse of change \eqref{change2}, 
we transform system \eqref{H2-2} into
\begin{eqnarray}
\begin{cases}
\dot{x}= \alpha_{11} x^2+\alpha_{12} xy^2+\alpha_{22} y^4,
\\
\dot{y}= (\beta_{12}xy+\beta_{22} y^3)/2.
\end{cases}
\label{X011-2}
\end{eqnarray}
Taking  the Poincar\'e transformations
 $x=v/z, \, y=1/z$  together with the time rescaling $dt_1= z^3 dt $,
system  \eqref{X011-2}  around the equator of the Poincar\'e sphere can be written   in
\begin{eqnarray}
\begin{cases}
\dot{v}=\alpha_{22} +(\alpha_{12}-\frac{\beta_{22}}{2})vz +(\alpha_{11}-\frac{\beta_{12}}{2})v^2z^2,
\\
\dot{z}=-z^2(\frac{\beta_{22}}{2}+\frac{\beta_{12}}{2}vz) .
\end{cases}
\label{X011-Iy}
\end{eqnarray}
Therefore,  system \eqref{X011-2} has a unique singularity at the infinity when $\alpha_{22}\ne 0$, which is $\tilde{I}_0$ located at the end of the $x$--axis.  When $\alpha_{22}= 0$, the infinity of system \eqref{X011-2} is fulled up with singularities. Moving away the common factor $z$ of system \eqref{X011-Iy}, the new system has a unique singularity on $z=0$, i.e. $\widetilde I_1$, which is  at the end of the $y$--axis. For this new system we can check that $\tilde{I}_1$ is either a saddle if
$(2\alpha_{12}-\beta_{22})\beta_{22}>0$,  or a node  if $(2\alpha_{12}-\beta_{22})\beta_{22}<0$.
Moreover, under the condition $\alpha_{22}= 0$  we can show that system \eqref{X011-2} has always the singularity $\tilde{I}_0$ at the end of the $x$--axis, which is degenerate.
When $ \beta_{22}=0$ and $\alpha_{22}= 0$, system \eqref{H2-2} has a common factor, which is out of our consideration.
When $2\alpha_{12}-\beta_{22} =0$, $\beta_{22}\ne 0$ and $\alpha_{22}= 0$, moving away the common factor $z^2$ of system \eqref{X011-Iy},
the new system has no singularities on $z=0$.

Having the above preparation, we now study the global phase portraits of system   $X_{011}$ according to the number of the zeros of  $\widehat G_2(u):=G_2(1,u)$,
which can have either one, or two, or three real roots. The next results can be obtained by using \cite[Theorem 10.5]{Ye1986}, the details are omitted.

\smallskip

\noindent {\it Case }$(i)$. {\it $\widehat G_2(u)$   has three different real zeros}.
Then we obtain the $3$ global  phase portraits given in Fig.~\ref{Fig_global-3} for the system $X_{011}$.


\begin{figure}[htp]
\begin{center}
\begin{tabular}{cc}
\includegraphics[width=3in]{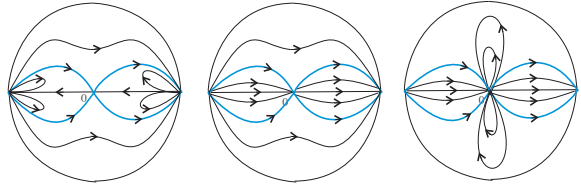}
\end{tabular}
\end{center}
\caption{Phase portraits of $X_{011}$ in case $(i)$. }
\label{Fig_global-3}
\end{figure}


\begin{figure}[htp]
\begin{center}
\begin{tabular}{cc}
\includegraphics[width=4in]{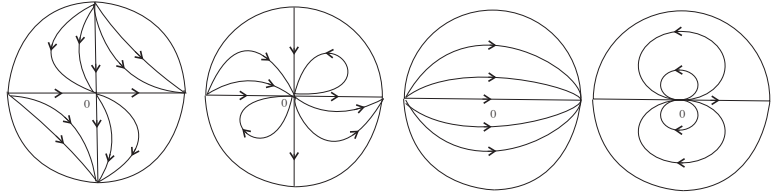}
\end{tabular}
\end{center}
\caption{Phase portraits of $X_{011}$ in case $(ii)$ and  case $(iii)$. }
\label{Fig_global-4}
\end{figure}

\noindent {\it Case }$(ii)$. {\it  $\widehat G_2(u)$ has  two  different  real zeros}. 
Then we get $2$ global phase portraits, which are given in the first two figures of Fig.~\ref{Fig_global-4} for  system $X_{011}$.

\noindent {\it Case }$(iii)$. {\it $\widehat G_2(u)$  has only one real zero}. 
Then we obtain  $2$ global  phase portraits, which are given in the last two figures of Fig.~\ref{Fig_global-4} for system $X_{011}$.

We complete the proof of the theorem.
 \end{proof}


We remark that the quintic quasi--homogeneous system other than $X_{111}$, $X_{011}$,  $X_{113}$ and  $X_{131}$ either has a nondegenerate linear part, or can be reduced to a linear system, or to a constant system by using Lemma \ref{lm-q5list} and Theorem \ref{th-homo}. Hence, their global structures are not difficult to be obtained.  The details are omitted here.

By Theorem \ref{th-homo} and \ref{th-homo-1} we can prove easily the results of \cite[Theorem 1.2]{TWZ2015}. In fact, only system $X_{021}$ in all quintic quasi--homogeneous but non--homogeneous systems can have a center at the origin when $a_{12}=-3b_{03}$ and $a_{12}b_{03}-a_{05}b_{10}>0$. And only in this case, the linear system $\widetilde{X}_{021}$ translated from $X_{021}$ and given in the proof of Theorem \ref{th-homo-1} has a pair of conjugate pure imaginary eigenvalues.


\section*{Acknowledgments}

The first author has received funding from the European Union's Horizon 2020 research and innovation
programme under the Marie Sklodowska-Curie grant agreement No 655212, and is  partially supported by the National Natural
Science Foundation of China (No. 11431008) and the RFDP of Higher Education of China grant (No.20130073110074).
The second author is partially supported by NNSF of China grant number 11271252,  by innovation program of Shanghai Municipal Education Commission grant 15ZZ012, and by a Marie Curie International Research Staff Exchange Scheme Fellowship within the 7th European Community Framework Programme, FP7-PEOPLE-2012-IRSES-316338.




\begin{thebibliography}{79}


\bibitem{Alga2012} Algaba A., Fuentes N., Garc\'ia C., Center of quasihomogeneous
polynomial planar systems, {\it Nonlinear Anal.
Real World Appl.} {\bf 13} (2012), 419--431.


\bibitem{Alga2009} Algaba A., Gamero E., Garc\'ia C., The integrability
problem for a class of planar systems, {\it Nonlinearity} {\bf 22} (2009), 396--420.


\bibitem{Alga2010} Algaba A., Garc\'ia C., Reyes M., Rational integrability of two dimensional quasi--homogeneous polynomial differential
systems, {\it Nonlinear Anal.} {\bf 73} (2010), 1318--1327.

\bibitem{Aziz2014} Aziz W.,  Llibre J., Pantazi C., Centers of quasi--homogeneous polynomial differential equations of degree three,
{\it Adv. Math.} {\bf 254} (2014), 233--250.


\bibitem{Bri1856} Briot Ch., Bouquet J. C., Recherches sur les
     propri$\acute{e}$t$\acute{e}$s des fonctions d$\acute{e}$finies par
     des $\acute{e}$quations diff$\acute{e}$rentielles,
     {\it J. $\acute{E}$cole Imp. Polyt.} {\bf 21}(1856), 133--197.

\bibitem{Cair2007} Cair\'o L., Llibre J., Polynomial first integrals for weighthomogeneous
planar polynomial differential systems of
weight degree 3,  {\it J. Math. Anal. Appl.} {\bf 331} (2007), 1284--1298.

\bibitem{ChowHale}  Chow S. N., Hale J. K.,  {\it Methods of Bifurcation Theory}, Springer--Verlag, Berlin, 1982.


\bibitem{Cima1990} Cima A., Llibre J., Algebraic and topological classification of the
homogeneous cubic systems in the plane, {\it J. Math. Anal. Appl.} {\bf  147} (1990),
420--448.

\bibitem{Date1979} Date T., Classification and analysis of two-dimensional homogeneous
quadratic differential equations systems, {\it J. Diff. Eqns.} {\bf 32} (1979), 311--334.

\bibitem{Dumor2006} Dumortier F., Llibre J., Art\'es J. C., {\it Qualititive theory
of planar differential systems}, Springer--Verlag, Berlin, 2006.

\bibitem{Garc2003} Garc\'ia I.,  On the integrability of quasihomogeneous and
related planar vector fields, {\it Intern. J. Bifur. Chaos} {\bf 13} (2003),
995--1002.

\bibitem{Garc2013}  Garc\'ia B., Llibre J., P\'erez del R\'io J. S., Planar quasihomogeneous
polynomial differential systems and their integrability,
{\it J. Diff. Eqns.} {\bf 255} (2013), 3185--3204.

\bibitem{Gavr2009} Gavrilov L., Gin\'e J., Grau M., On the cyclicity of
weight-homogeneous centers, {\it J. Diff. Eqns.} {\bf 246} (2009), 3126--3135.

\bibitem{GGL2013} Gin\'e J., Grau M.,  Llibre J., Polynomial and rational first integrals
for planar quasi-homogeneous polynomial differential systems, {\it     Discrete Contin. Dyn. Syst.}  {\bf 33}  (2013), 4531--4547.

\bibitem{Gori1996} Goriely A., Integrability, partial integrability, and nonintegrability for systems of ordinary differential equations,
{\it J. Math. Phys.} {\bf 37} (1996), 1871--1893.

\bibitem{Hu2007} Hu Y., On the integrability of quasihomogeneous systems
and quasidegenerate infinity systems, {\it  Adv. Difference
Eqns.} (2007), Art ID 98427, 10 pp.


\bibitem{Li2009} Li W., Llibre  J., Yang J., Zhang Z.,  Limit cycles bifurcating from the period annulus of quasi--homegeneous
centers, {\it J. Dyn. Diff. Eqns.} {\bf 21} (2009), 133--152.

\bibitem{Liang2014} Liang H., Huang J., Zhao Y., Classification of global phase portraits of planar quartic
quasi--homogeneous polynomial differential systems, {\it  Nonlinear Dynam.}{\bf 78} (2014),  1659--1681.

\bibitem{Llib1996} Llibre J., P\'erez del R\'io J. S., Rodr\'iguez  J. A., Structural
stability of planar homogeneous polynomial vector fields. Applications to
critical points and to infinity, {\it J. Diff. Eqns.} {\bf 125} (1996), 490--520.



\bibitem{Llib2002} Llibre J., Zhang X., Polynomial first integrals for quasihomogeneous
polynomial differential systems, {\it Nonlinearity} {\bf 15} (2002), 1269--1280.




\bibitem{New1978} Newton T. A., Two dimensional homogeneous quadratic differential systems, {\it SIAM Review} {\bf 20} (1978), 120--138.





\bibitem{Sans} Sansone G.,  Conti R., {\it Non-Linear Differential Equations},
     Pergamon Press, New York, 1964.


\bibitem{Sib1977} Sibirskii K. S., Vulpe N. I., Geometric classification of quadratic
differential systems, {\it Differential Equations} {\bf 13} (1977), 548--556.


\bibitem{TWZ2015} Tang  Y.,  Wang L., Zhang  X., Center of planar quintic quasi--
homogeneous polynomial differential systems, {\it Discrete Contin. Dyn. Syst.}
{\bf 35} (2015), 2177--2191.


\bibitem{Vdo1984} Vdovina E. V., Classification of singular points of the equation $y' =
(a_0x^2 + a_1xy + a_2y^2)/(b_0x^2 + b_1xy + b_2y^2)$ by Forster's method (Russian),
{\it Diff. Uravn.} {\bf 20} (1984), 1809--1813.


\bibitem{Ye1986} Ye Y. Q., {\it Theory of Limit Cycles}, Trans. Math. Monographs {\bf 66}, Amer. Math. Soc. Providence, RI, 1984.

\bibitem{Ye1995} Ye Y. Q., {\it Qualitative Theory of Polynomial Differential Systems}, Shanghai Science $\&$ Technology Pub., Shanghai, 1995.


\bibitem{Yosh1983} Yoshida H., Necessary conditions for existence of algebraic
first integrals I and II, {\it Celestial Mech.} {\bf 31} (1983),
363--379, 381--399.

\bibitem{ZDHD} Zhang Z. F.,  Ding T. R.,  Huang W. Z.,  Dong Z. X.,
{\it Qualitative Theory of Differential Equations}, Transl. Math.
Monographs {\bf 101}, Amer. Math. Soc., Providence, 1992.


\end{thebibliography}
\end{document}